\newtheorem{theorem}{Theorem}
\newtheorem{proposition}{Proposition}
\newtheorem{lemma}{Lemma}
\newtheorem{corollary}{Corollary}
\newtheorem*{claim}{Claim}
\theoremstyle{definition}
\newtheorem{definition}{Definition}
\theoremstyle{remark}
\newtheorem{remark}{Remark}
\newcommand{\R}{\mathbb R}
\newcommand{\diam}{\operatorname{diam}}
\newcommand{\wt}[1]{{\widetilde{#1}}}
\title[Propagation of smallness for gradients of harmonic functions]{Propagation of smallness near codimension two for gradients of harmonic functions}
\author{Benjamin Foster and Josep M. Gallegos}
\begin{document}
	
	\begin{abstract}
		Let $u$ be a harmonic function in the unit ball $B_1 \subset \mathbb R^n$ satisfying $\sup_{B_1}|\nabla u|=1$. We show that if {$|\nabla u|$} is $\epsilon$-small on a set {$E\subset B_{1/2}$} with positive $(n-2+\delta)$-dimensional Hausdorff content for some $\delta>0$, then $\sup_{B_{1/2}} |\nabla u| \leq C \epsilon^\alpha$ with $C,\alpha>0$ depending only on $n,\delta$ and the $(n-2+\delta)$-Hausdorff content of $E$. This is an improvement over a similar result in \cite{LM} that required $\delta>1-c_n$ for a small dimensional constant $c_n$ and reaches the sharp threshold for the dimension of the smallness sets from which propagation of smallness can occur.
	\end{abstract}
	
	\maketitle
	\section{Introduction}
	
	One classical inequality for harmonic functions is the three spheres theorem, which states that if $u$ is a harmonic function and $0<r_1<r_2<r_3$, then
	\[
	\Vert u\Vert_{L^2(\partial B_{r_2})} \le \Vert u \Vert_{L^2(\partial B_{r_1})}^{\alpha} \Vert u \Vert_{L^2(\partial B_{r_3})}^{1-\alpha},
	\]
	where $0<\alpha<1$ depends on $r_1,r_2,r_3$. In particular, if we think of normalizing the norm on the largest sphere to be 1, then the smallness of $u$ on $\partial B_{r_1}$ propagates to the larger sphere $\partial B_{r_2}$.
	We can consider more general estimates of the form
	\begin{equation}\label{eq PS}
		\Vert f \Vert_{L^{\infty}(B_{1/2}(0))}\lesssim \Vert f\Vert_{L^{\infty}(E)}^{\alpha}\Vert f\Vert_{L^{\infty}(B_1(0))}^{1-\alpha},
	\end{equation}
	for a fixed set $E\subset B_{1/2}(0)$, $\alpha$ depending on $E$ and $f$ belonging in some class of functions $\mathcal F$, usually solutions of some PDE. These propagation of smallness results from $E$ to $B_{1/2}(0)$ can be interpreted as a quantitative version of unique continuation, as they imply that the only function in $\mathcal F$ that vanishes on $E$ is the zero function. 
    
	There have been a number of works studying variations of this phenomenon. Some older results that dealt with the case of $f$ being a solution to an elliptic equation with analytic coefficients and $E$ being a set of positive measure include \cite{Nad} and \cite{Vess1}, whereas \cite{nad2} and \cite{vess2} dealt with more general elliptic equations but with a worse estimate than \eqref{eq PS}. The interested reader can also look at \cite{Fos}, \cite{LM}, \cite{Mal}, \cite{zhuprop} for various results for solutions of elliptic equations with weaker hypotheses on the set $E$ than positive measure. In the parabolic setting, propagation of smallness results for harmonic functions have been useful in establishing observability inequalities, which in turn can be used to deduce null-controllability results. See for instance \cite{AEWZ}, as well as \cite{burqheat}, \cite{greenheat} for more recent applications in the parabolic setting. When $f$ solves a divergence-form second-order elliptic equation with Lipschitz coefficients, Logunov and Malinnikova showed in \cite{LM} that propagation of smallness holds for $f$ provided the smallness set $E$ has positive $(n-1+\delta)$-dimensional Hausdorff content for some $\delta>0$. Since the nodal set of $f$ can have dimension $n-1$, the extra $\delta$ in the Hausdorff content is necessary. They also showed that $|\nabla f|$ satisfies a propagation of smallness result if $E$ has positive $(n-1-c_n)$-dimensional Hausdorff content, where $c_n>0$ is a small dimensional constant. Since the critical set where $|\nabla f|=0$ has codimension 2, however, they conjectured that the correct condition on $E$ should be that it has positive $(n-2+\delta)$-dimensional Hausdorff content. Two recent works \cite{Fos}, \cite{zhuprop} have studied propagation of smallness from sets with $\delta$-dimensional Hausdorff content in the plane, but progress has been slow in higher dimensions. We aim to answer the conjecture of Logunov and Malinnikova in the affirmative for gradients of harmonic functions.
	\begin{theorem}
		\label{thm:prop_of_smallness}
		Let $m$, $\delta$ be positive numbers, $u$ be a harmonic function in the unit ball $B\subset \R^n$ such that $\sup_{B} |\nabla u| = 1$, and suppose $E \subset B_{1/2}(0)$
		satisfies $\mathcal H_\infty^{n-2+\delta}(E) > m$.
		Then there exist constants $C, \alpha >0$ depending on $m, \delta$ and $n$ only such that
		\[
		\sup_{B(0, 1/2)} |\nabla u| \leq C \left ( \sup_E |\nabla u| \right)^{\alpha}.
		\]
	\end{theorem} 
	
    We remark that Theorem \ref{thm:prop_of_smallness} was already known in the special case where $E$ lies in a codimension $1$ hyperplane, as shown in \cite{Mal}, and this case serves as the starting point of our proof. The key ingredient in our argument will be Lemma \ref{lemma:codimension_hyperplane}, which is an improved version of Logunov's hyperplane lemma for the harmonic case with scaling that is amenable to getting results in codimension $2-\delta$ for arbitrary values of $\delta>0$.  We only state and prove the result in the harmonic case, but the standard modifications to the argument should also show that it holds for operators of the form $\text{div}(A\nabla\cdot)$ where $A$ is uniformly elliptic and has analytic coefficients. It would be interesting to understand how to extend this result to elliptic operators whose coefficients are only $C^{\infty}$ or $C^k$ smooth. Theorem \ref{thm:NV}, which is taken from \cite[Theorem 1.1]{NV} implies a weaker propagation of smallness result that holds for second-order elliptic equations with Lipschitz coefficients, which we discuss in the final section of the paper, but a propagation of smallness of the form \eqref{eq PS} is not known for gradients of solutions to elliptic equations with nonanalytic coefficients when the smallness set $E$ is merely assumed to have positive $(n-2+\delta)$-Hausdorff content.

    \subsection*{Acknowledgments}
    Most of the work on this article was carried out during the second author’s visit to Stanford University. We are very grateful to Eugenia Malinnikova and to the Department of Mathematics for their hospitality, and to Eugenia Malinnikova in particular for many helpful discussions while working on this project. We are also grateful to Shaghayegh Fazliani for several helpful discussions. B.F. received partial support from NSF grant DMS-2247185 and J.G. was supported by the European Research Council (ERC) under the European Union’s Horizon 2020 research and innovation programme (grant agreement 101018680) and partially supported by MICINN (Spain) under grant PID2020-114167GB-I00, and 2021-SGR-00071 (Catalonia).
    
	\section{Some preliminaries}
	Following \cite[Section 8]{Mat95}, we define the \textit{Riesz $s$-capacity} of a set $E$ by
	\[
	\operatorname{Cap}_s(E)=\sup \left\{I_s(\mu)^{-1}: \mu(A)=1\right\},
	\]
	where the supremum is taken over all Radon measures $\mu$ compactly supported on $E$, and the $s$-energy of the measure $\mu$ is given by
	\[
	I_s(\mu)=\int|x-y|^{-s} d \mu(x) d \mu(y) .
	\]
	We also define the \textit{Hausdorff $s$-content} of a set $E$ by
	\[
	\mathcal H_\infty^s(E)=\inf \left\{\sum_{i=1}^{\infty}\left(\operatorname{diam} U_i\right)^s: \bigcup_{i=1}^{\infty} U_i \supseteq E,\right\},
	\]
	where the infimum is taken over all countable covers $(U_i)_i$ of $E$. Our arguments will rely on dividing a large cube into many subcubes and considering how many intersect a fixed sublevel set of $|\nabla u|$. If we subdivide the unit cube $Q_0$ into $K^n$ many congruent subcubes and let $S$ denote the set of cubes intersecting $E$, then we can lower bound the cardinality of $S$ via
	\[
	|S| \ge C_n \mathcal H_\infty^s(E) K^{s}.
	\]
	
	We also present a theorem from \cite[Main Result]{Mal} that will be necessary in the sequel. It gives a propagation of smallness result for sets with positive $(n-2+\delta)$-Riesz capacity, provided that it lies inside a hyperplane.
	\begin{theorem}
		\label{thm:propagation_smallness_hyperplane_Malinnikova}
		Let $u$ be a harmonic function in the unit ball $B \subset \mathbb R^n$. Suppose that $E$ is a compact subset of a codimension $1$ hyperplane, $E \subset B_{1 / 2}$, and for some $\delta>0$ the Riesz capacity $\operatorname{Cap}_{n-2+\delta}(E)=m$ is positive. Then for any compact subset $K\subset B$ and all $x\in K$
		\[
		|\nabla u(x)| \leq C \Vert \nabla u\Vert_{L^\infty(B)}^{1-\alpha} \Vert \nabla u \Vert_{L^\infty(E)}^\alpha 
		\] holds with some $C=C(n)$ and $\alpha=\alpha(K, n, \delta, m)$.
	\end{theorem}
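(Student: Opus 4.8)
The plan is to put the problem in standard position, remove the symmetry of the hyperplane by an even/odd decomposition, and then run a counting-and-iteration argument with three-sphere inequalities in the spirit of \cite{LM}. After an affine change of variables we may take the hyperplane to be $\{x_n=0\}$, and after scaling we may assume $\|\nabla u\|_{L^\infty(B)}=1$; set $\epsilon:=\|\nabla u\|_{L^\infty(E)}$, which we may assume small (otherwise there is nothing to prove for $C$ large). It suffices to prove $\sup_{B_{1/2}}|\nabla u|\le C\epsilon^{\alpha}$: the bound for a general compact $K\subset B$ follows by chaining the three-sphere inequality for the harmonic components $\partial_i u$ along a finite cover of balls joining $B_{1/2}$ to $K$, which worsens $\alpha$ by a factor depending on $K$, and uses only that for harmonic functions this inequality holds with an exponent $\theta(n)\in(0,1)$ that is independent of the function.

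Next, decompose $u=u_e+u_o$ with $u_e(x',x_n)=\tfrac12\big(u(x',x_n)+u(x',-x_n)\big)$; both summands are harmonic and have gradients bounded by $2$ on $B_{3/4}$. On $\{x_n=0\}$ we have $u_o\equiv 0$, hence $\nabla' u_o\equiv 0$, and $\partial_n u_e\equiv 0$; therefore on $E$ one has $|\nabla' u_e|=|\nabla' u|\le\epsilon$ and $|\partial_n u_o|=|\partial_n u|\le\epsilon$. It thus suffices to prove separately: (i) if a harmonic function even in $x_n$ has tangential gradient $\le\epsilon$ on $E$, then $|\nabla u_e|\le C\epsilon^{\alpha}$ on $B_{1/2}$; (ii) if a harmonic function odd in $x_n$ has normal derivative $\le\epsilon$ on $E$, then $|\nabla u_o|\le C\epsilon^{\alpha}$ on $B_{1/2}$. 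In each case the pertinent scalar function — one of the harmonic functions $\partial_1 u_e,\dots,\partial_{n-1}u_e$, resp.\ $\partial_n u_o$ — is $\le\epsilon$ on $E$; control of the whole gradient is then recovered from it because the even, resp.\ odd, harmonic function is determined by the Cauchy data $(u_e|_{x_n=0},0)$, resp.\ $(0,\partial_n u_o|_{x_n=0})$, on the hyperplane.

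The engine for (i) and (ii) is as follows. Fix a cube $Q_0\supset B_{1/2}$ centered on $\{x_n=0\}$, subdivide it into $K^n$ congruent subcubes, and let $h$ denote the relevant harmonic function. Since $\operatorname{Cap}_{n-2+\delta}(E)=m>0$, the standard comparison $\mathcal H_\infty^{n-2+\delta}(E)\gtrsim m$ together with the content–counting inequality recorded above shows that $\gtrsim m K^{n-2+\delta}$ of the subcubes meeting $\{x_n=0\}$ contain a point where $|h|\le\epsilon$. Propagating this smallness first between neighbouring subcubes along the hyperplane and then off the hyperplane into $Q_0$, by iterating the (function-independent) three-sphere inequality for $h$ over a tree of balls, reaches every subcube except those lying in an exceptional region where the doubling index $N$ of $\nabla u$ around $B_{1/2}$ — which is $\lesssim\log\big(1/\sup_{B_{1/2}}|\nabla u|\big)$ since $\sup_{B_1}|\nabla u|=1$ — forces the nodal or critical set of $h$ to concentrate. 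Controlling that region by a scaling-robust form of Logunov's hyperplane lemma (the refinement of which is isolated as Lemma \ref{lemma:codimension_hyperplane} in the present paper) produces a lower bound $\sup_{B_{1/2}}|\nabla u|\gtrsim\epsilon^{F(N)}$; combining with $N\lesssim\log\big(1/\sup_{B_{1/2}}|\nabla u|\big)$ and solving the resulting inequality for $\sup_{B_{1/2}}|\nabla u|$ gives $\sup_{B_{1/2}}|\nabla u|\le C\epsilon^{\alpha}$.

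The main obstacle, and the only place the hyperplane hypothesis is genuinely used, is the propagation \emph{off} the hyperplane. Inside $\R^n$ the set $E$ has Hausdorff dimension only about $n-2$, far below the $n-1+\delta$ threshold of the interior propagation-of-smallness theorem of \cite{LM}, so that result is unavailable directly; it is the even/odd splitting that rescues the count. For $u_o$ the problem is a Cauchy problem for a harmonic function with vanishing Dirichlet data on the flat face whose Neumann data is small on $E$, and for $u_e$ it is a Cauchy problem with vanishing Neumann data whose Dirichlet data has small tangential gradient on $E$; in both cases the set one must avoid effectively lives in the hyperplane, with codimension matched exactly to the exponent $n-2+\delta$ — codimension $1-\delta$ within the hyperplane for the Cauchy/Neumann-data problem, codimension $2-\delta$ for the tangential-gradient problem, precisely the codimension-two threshold for critical sets restricted to a codimension-one subspace. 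Making this heuristic quantitative — keeping all constants uniform through restriction to the hyperplane, despite the even harmonic extension into $\R^n$ amplifying high frequencies — is the heart of the proof, and I would expect the bulk of the work to be the hyperplane lemma in the required scaling-robust form, with the counting scheme of the third paragraph then essentially mechanical.
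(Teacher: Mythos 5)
There is a genuine gap, and it appears in two places. First, note that the paper does not prove this statement at all: it is quoted verbatim from \cite[Main Result]{Mal} and then used as a black box, so the only question is whether your blind argument stands on its own. It does not, because it is circular at the decisive step. You invoke ``a scaling-robust form of Logunov's hyperplane lemma (the refinement of which is isolated as Lemma \ref{lemma:codimension_hyperplane} in the present paper)'' to control the exceptional region. But in this paper Lemma \ref{lemma:codimension_hyperplane} is deduced from Lemma \ref{lemma:hyperplane_riesz}, whose proof consists precisely of applying Theorem \ref{thm:propagation_smallness_hyperplane_Malinnikova} to the union of bad cubes in the hyperplane. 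So the lemma you lean on is downstream of the theorem you are trying to prove, and you do not supply an independent proof of it; you explicitly defer it (``the bulk of the work'') rather than carry it out.

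Second, even setting the circularity aside, the analytic engine is missing. A set $E$ with $\operatorname{Cap}_{n-2+\delta}(E)>0$ has zero measure and, inside the hyperplane, dimension as low as $n-2+\delta$; smallness of a harmonic function on such a set does not bound its supremum on any ball, so a ``tree of balls'' of three-sphere inequalities cannot even be seeded, let alone propagated off the hyperplane. What is needed is a quantitative unique-continuation-from-capacity statement (a two-constants/harmonic-measure type estimate keyed to Riesz capacity, or Malinnikova's reduction of $\nabla u$ to a generalized Cauchy--Riemann system), and that is exactly the content of the cited theorem, not a routine consequence of counting plus three spheres. Your even/odd reduction is also not as clean as claimed: the restriction $u_e|_{\{x_n=0\}}$ is not harmonic in $\mathbb R^{n-1}$ (e.g.\ $u=x_1^2-x_n^2$), so ``the even/odd harmonic function is determined by its Cauchy data'' is only a qualitative uniqueness remark and gives no quantitative control of $|\nabla u_e|$, $|\nabla u_o|$ from smallness of $\nabla' u$ and $\partial_n u$ on $E$. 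The counting comparison $\mathcal H_\infty^{n-2+\delta}(E)\gtrsim\operatorname{Cap}_{n-2+\delta}(E)$ and the final doubling-index bookkeeping are fine, but they are the easy outer shell; the core estimate is asserted, not proved.
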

	
    We will also be making use of triadic lattices on Euclidean space $\R^n$. Given a cube $Q$ centered at the origin of side length $\ell({Q})$ and a number $(2A+1)$ which will always be a power of 3, we partition it into $(2A+1)^n$ many congruent subcubes $q$ and call these the children of $Q$; analogously, $Q$ is the parent of each cube $q$. If we repeat the procedure by subdividing some $q$ into $(2A+1)^n$ many subcubes, we say that each child of $q$ is a descendant of $Q$; we continue to call further subcubes (obtained by iterating the subdivision procedure) descendants of $Q$, as well.
	
	\subsection*{The doubling index}
	We collect some relevant definitions and properties of the doubling index here. The doubling index is comparable to Almgren's frequency function, which has been used extensively in the study of level sets and sublevel sets of solutions to elliptic equations. The frequency function is named after Almgren, who used it in \cite{almgren} to study minimal surfaces. Using the frequency function to study elliptic PDEs goes back to the work of Garofalo and Lin \cite{GL1}, \cite{GL2}, where control of the frequency was shown to imply a doubling condition. This doubling condition could, in turn, be used to define the so-called doubling index, which was the preferred incarnation of the frequency used in the works of Logunov and Malinnikova in \cite{LogPoly}, \cite{LogLower}, \cite{LM2d3d}, \cite{LM}. For a more complete exposition on the doubling index and the frequency function, we direct the interested reader to \cite{LMLec}. Note that in our work, we will be considering doubling index for the size of gradients of solutions, as opposed to the doubling index for the size of solutions that was considered in many of these references.
	\begin{definition}[Normalized doubling index] 
		For a harmonic function $u$, we define the (normalized) doubling index of a ball $B(x,r)$ as  
		\[
		N(x,r) = \log \frac{\sup_{B(x,2r)} |\nabla u|}{\sup_{B(x,r)} |\nabla u|},
		\]
		and the (maximal normalized) doubling index of a cube $Q$ as
		\[
		 N(Q) =  \sup_{x\in Q, \,r\leq\ell(Q)}\log \frac{\sup_{B(x, 10 n r)} |\nabla u|}{\sup_{B(x,r)} |\nabla u|}
		\]
	 	where $\ell(Q)$ denotes the side length of a cube $Q$.
	\end{definition}
	Due to the equivalence of $L^p$ norms of harmonic functions, this could alternatively be formulated in terms of $L^2$ averages of $|\nabla u|$ over balls. 
	We have the following well-known almost monotonicity property of the doubling index, which goes back to the original monotonicity results and comparability of doubling index and frequency of \cite{GL1}, \cite{GL2}. This guarantees that if we instead take $r=\ell(Q)$ in the formula for $N(Q)$, we will get something comparable to the actual value of $N(Q)$.
	\begin{proposition}[Almost monotonicity of the doubling index]\label{prop almost monotonicity}
		For $\theta<1/2$, we have that there is a constant $C>0$ such that
		\[
		N(x,\theta r)\le 2N(x,r)+C
		\]
	\end{proposition}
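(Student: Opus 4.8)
The plan is to pass from supremum norms to $L^2$ averages over spheres, where Almgren's frequency is monotone, and then transfer back; the price of that transfer is a dilation of radii, and this dilation is exactly what produces the factor $2$ in the statement. Write $v=\nabla u$, which is a vector of harmonic functions and, since $\sup_B|\nabla u|=1$, is not identically zero. Fixing the center $x$ (and taking $r$ small enough that the relevant balls lie in $B$, as is already implicit in the definition of $N(x,r)$), put $M(\rho)=\sup_{B(x,\rho)}|v|$, let $h(\rho)$ be the average of $|v|^2$ over $\partial B(x,\rho)$, and let $\beta(\rho)=\rho\int_{B(x,\rho)}|\nabla v|^2\big/\int_{\partial B(x,\rho)}|v|^2$ be the Almgren frequency of $v$ at $x$. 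The first step is the Garofalo--Lin monotonicity (see \cite{GL1}, \cite{GL2}, \cite{LMLec}): $\beta(\rho)$ is non-decreasing in $\rho$; since $\tfrac{d}{d\log\rho}\log h(\rho)=2\beta(\rho)\ge0$, this is equivalent to saying that $F(t):=\log h(e^t)$ is convex and non-decreasing. The vector-valued version follows from the same Rellich identity and Cauchy--Schwarz as the scalar one.

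The second step is two elementary comparisons between $M$ and $h$. Trivially $h(\rho)\le M(\rho)^2$. For the reverse, apply the solid mean value property to each component of $v$ on a ball of radius $\epsilon\rho$ centered at an arbitrary point of $\overline{B(x,\rho)}$, and then use that $h$ is non-decreasing: this gives, for every $\epsilon>0$,
\[
M(\rho)^2\le C_n(\epsilon)\, h\big((1+\epsilon)\rho\big),\qquad C_n(\epsilon)=\left(\tfrac{1+\epsilon}{\epsilon}\right)^{n}.
\]
The dilation $1+\epsilon$ cannot be removed: a high-degree zonal spherical harmonic shows $M(\rho)^2/h(\rho)$ is not controlled by any dimensional constant, so $\log M$ is \emph{not} convex in $\log\rho$ up to an additive constant, and one is forced to accept the dilation.

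Combining these, I bound $\log M(2r)$ from below by $h(2r)$ and $\log M(r)$ from above by $h((1+\epsilon)r)$, and use convexity of $F$ together with monotonicity of $F'$ to get, for $\epsilon<1$, a lower bound $2N(x,r)\ge\log\tfrac{2}{1+\epsilon}\,F'(\log r)-\log C_n(\epsilon)$. Symmetrically, bounding $\log M(2\theta r)$ from above by $h(2(1+\epsilon)\theta r)$ and $\log M(\theta r)$ from below by $h(\theta r)$ gives $2N(x,\theta r)\le\log\big(2(1+\epsilon)\big)\,F'(\log r)+\log C_n(\epsilon)$, provided $\epsilon$ is small enough that $2(1+\epsilon)\theta<1$ — this is where $\theta<1/2$ is used, so that monotonicity of $F'$ lets me replace $F'$ at the sub-$r$ scale $\log\big(2(1+\epsilon)\theta r\big)$ by $F'(\log r)$. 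Eliminating $F'(\log r)$ between the two estimates yields
\[
N(x,\theta r)\le\frac{\log\big(2(1+\epsilon)\big)}{\log\big(2/(1+\epsilon)\big)}\,N(x,r)+C,
\]
where $C$ depends on $\epsilon$ and $n$. Since the ratio in front tends to $1$ as $\epsilon\to0$, I fix $\epsilon=\epsilon(\theta,n)$ small enough that it is at most $2$ (for instance $(1+\epsilon)^3\le2$) while keeping $2(1+\epsilon)\theta<1$; because $N(x,r)\ge0$ (as $M$ is non-decreasing) this yields $N(x,\theta r)\le2N(x,r)+C$ with $C=C(\theta,n)$.

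The main obstacle, and the reason for the $2$ rather than a $1$, is precisely this unavoidable radius dilation in the passage between sup-norms and spherical means: balancing it against the convexity gain forces the multiplicative constant to exceed $1$ and forces $\theta$ to be strictly below $1/2$, since at $\theta=1/2$ the dilated scale $2(1+\epsilon)\theta r$ is no longer smaller than $r$ and monotonicity of $\beta$ is no longer available in the direction needed. The same mechanism makes $C$ depend on $\theta$, blowing up as $\theta\to1/2^{-}$; this is harmless for the applications, where $\theta$ is a fixed ratio coming from a dyadic or triadic subdivision.
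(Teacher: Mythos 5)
Your proof is correct for the statement as written, and it rests on the same mechanism as the paper's: pass to $L^2$ quantities where Almgren/Garofalo--Lin monotonicity is available (you use spherical averages and convexity of $\log h$ in $\log\rho$, the paper uses monotonicity of the $L^2$ ball-average doubling index $N_2$ plus the $L^2$--$L^\infty$ equivalence), and pay for the sup-to-mean comparison with a radius dilation $1+\epsilon$, which is exactly what produces the factor $2$. The one substantive difference is quantitative: your elimination step needs $2(1+\epsilon)\theta<1$, so your constant depends on $\theta$, blows up as $\theta\to 1/2^{-}$, and the argument genuinely does not reach $\theta=1/2$; the paper's proof yields a constant uniform over all $\theta\le 1/2$, and this matters downstream, since Corollary \ref{cor big scale doubling} invokes the proposition with ratio exactly $1/2$ (the instance $N(x,2tr)\ge N(x,tr)/2-c'$). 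Consequently your closing claim that $\theta$ is ``forced'' to be strictly below $1/2$ is not accurate, and the repair stays entirely inside your framework: in the lower bound keep the left-endpoint derivative, i.e.\ $2N(x,r)\ge \log\frac{2}{1+\epsilon}\,F'\bigl(\log((1+\epsilon)r)\bigr)-\log C_n(\epsilon)$, rather than weakening to $F'(\log r)$; then the upper bound only requires $2(1+\epsilon)\theta\le (1+\epsilon)$, i.e.\ $\theta\le 1/2$, to replace $F'\bigl(\log(2(1+\epsilon)\theta r)\bigr)$ by $F'\bigl(\log((1+\epsilon)r)\bigr)$, and eliminating that quantity gives the same conclusion with $C=C(n,\epsilon)$ uniform in $\theta\le 1/2$.
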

	\begin{proof}
		 We will make use of the true monotonicity (instead of almost monotonicity) for the case where the doubling index is defined using $L^2$ averages rather than the $L^{\infty}$ norm. One reference for this that treats gradients of harmonic functions is \cite[Proposition 3]{Fos}. We denote this average by
		\[
		\langle f\rangle_{B(x,r)}=\frac{1}{\omega_n r^n}\int_{B(x,r)} |f(y)|^2\,dy,
		\]
		and we denote the alternate definition of frequency via
		\[
		N_2(x,r)=\log\frac{\langle |\nabla u|\rangle_{B(x,2r)}}{\langle |\nabla u|\rangle_{B(x,r)}}.
		\]
        The same argument that shows $N_2$ is monotone when the ratio of the radius of the balls is 2 works to show that this is monotone when replacing the ratio by any number greater than 1.
        
		It is well known that the $L^2$ and $L^{\infty}$ norms of harmonic functions are equivalent in the sense that for a harmonic function $v$, we have
		\begin{equation}\label{eq 2-infty eqce}
			\langle v\rangle_{B(x,r)} \le \Vert v \Vert_{L^{\infty}(B_r(x)} \le C_{\epsilon} \langle v\rangle_{B(x,(1+\epsilon)r)}.
		\end{equation}
		We can also get a version of this for gradients since each partial derivative $\partial_{x_i}v$ is a harmonic function. Now, if $\theta\le 1/2$, we have that
		\begin{align*}
			N(x,\theta r)=\log\frac{\Vert \nabla u\Vert_{L^{\infty}(B(x,2\theta r))}}{\Vert \nabla u\Vert_{L^{\infty}(B(x,\theta r))}} &\le \log \frac{\langle |\nabla u|\rangle_{B(x,(2+\epsilon)\theta r)}C_{\epsilon}}{\langle |\nabla u|\rangle_{B(x,\theta r)}} \\
			&\le \tilde{C}_{\epsilon} +\log\frac{\langle |\nabla u|\rangle_{B(x,2r)}}{\langle |\nabla u|\rangle_{B(x,2r/(2-\epsilon))}}\frac{\langle |\nabla u|\rangle_{B(x,2r/(2-\epsilon))}}{\langle |\nabla u|\rangle_{B(x,2r/(2+\epsilon))}},
        \end{align*}
        where we used monotonicity of $N_2$ with a ratio of $2+\epsilon$. Note that we have the estimates
        \[
        \frac{\langle |\nabla u|\rangle_{B(x,2r/(2-\epsilon))}}{\langle |\nabla u|\rangle_{B(x,2r/(2+\epsilon))}}\le \frac{\langle |\nabla u|\rangle_{B(x,2(2+\epsilon)r/(2-\epsilon)^2)}}{\langle |\nabla u|\rangle_{B(x,2r/(2-\epsilon))}}\le\frac{\langle |\nabla u|\rangle_{B(x,2r)}}{\langle |\nabla u|\rangle_{B(x,2r/(2-\epsilon))}}
        \]
        due to monotonicity of $N_2$ as well as monotonicity of $L^2$ averages of $|\nabla u|$. Thus, we deduce
            \[
			N(x,\theta r)\le \tilde{C}_{\epsilon}+2N(x,r),
		\]
		where we have made use of \eqref{eq 2-infty eqce}. Taking $\epsilon=1/100$, for instance, completes the proof.
	\end{proof}
    \begin{remark}
        The entire proof still works with a different ratio $L$ in the definition of doubling index, provided that $\theta<1/L$ is imposed.
    \end{remark}
    As a corollary, we see that we can take $r=\ell(Q)$ in the definition of maximal doubling at the loss of a constant.
    \begin{corollary}[Comparability of maximal doubling and doubling]
        \[
        N(Q)\le C_0+C \sup_{x\in Q} \log \frac{\sup_{B(x, 10 n \ell(Q))} |\nabla u|}{\sup_{B(x,\ell(Q))} |\nabla u|}
        \]
    \end{corollary}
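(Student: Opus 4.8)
The plan is to reduce the statement to a single pointwise estimate and then take a supremum. Write $L=10n$ and, for $x\in\mathbb{R}^n$ and $\rho>0$, abbreviate
\[
D_\rho(x)=\log\frac{\sup_{B(x,10n\rho)}|\nabla u|}{\sup_{B(x,\rho)}|\nabla u|},
\]
so that $N(Q)=\sup_{x\in Q,\ r\le\ell(Q)}D_r(x)$ and the right-hand side of the corollary is exactly $C_0+C\sup_{x\in Q}D_{\ell(Q)}(x)$. Hence it suffices to show that there are constants $C_0,C$ depending only on $n$ with
\[
D_r(x)\le C_0+C\,D_{\ell(Q)}(x)\qquad\text{for all }x\in Q,\ r\le\ell(Q);
\]
taking the supremum over such $x$ and $r$ then yields the claim. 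I would split this into the small-scale regime $r<\ell(Q)/(10n)$ and the comparable-scale regime $\ell(Q)/(10n)\le r\le\ell(Q)$.

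The small-scale regime is immediate from the Remark following Proposition~\ref{prop almost monotonicity}, i.e.\ almost monotonicity with the ratio $L=10n$ in place of $2$: applied with $\theta=r/\ell(Q)<1/(10n)=1/L$ it gives $D_r(x)\le 2D_{\ell(Q)}(x)+C$. For the comparable-scale regime, monotonicity of $\rho\mapsto\sup_{B(x,\rho)}|\nabla u|$ gives $\sup_{B(x,10nr)}|\nabla u|\le\sup_{B(x,10n\ell(Q))}|\nabla u|$ and $\sup_{B(x,r)}|\nabla u|\ge\sup_{B(x,\ell(Q)/(10n))}|\nabla u|$, so
\[
D_r(x)\le\log\frac{\sup_{B(x,10n\ell(Q))}|\nabla u|}{\sup_{B(x,\ell(Q)/(10n))}|\nabla u|}=D_{\ell(Q)}(x)+\log\frac{\sup_{B(x,\ell(Q))}|\nabla u|}{\sup_{B(x,\ell(Q)/(10n))}|\nabla u|}.
\]
Everything therefore comes down to controlling the last term---the $L^\infty$ doubling of $|\nabla u|$ over the bounded range of radii $[\ell(Q)/(10n),\ell(Q)]$---by $C_0+C\,D_{\ell(Q)}(x)$. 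To do this I would pass to the $L^2$-averaged frequency $N_2$ used in the proof of Proposition~\ref{prop almost monotonicity}, which is genuinely (not merely almost) monotone in the radius, together with the $L^2$--$L^\infty$ comparison \eqref{eq 2-infty eqce} for $|\nabla u|$ (valid since each $\partial_{x_i}u$ is harmonic). Writing $\Psi(\rho)=\log\sup_{B(x,\rho)}|\nabla u|$ and $\Phi(\rho)=\log\langle|\nabla u|\rangle_{B(x,\rho)}$, one has $\Phi(\rho)\le\Psi(\rho)\le\Phi((1+\epsilon)\rho)+C_\epsilon$; telescoping $\Phi$ over the $O_n(1)$ dyadic doublings between $\ell(Q)/(10n)$ and $(1+\epsilon)\ell(Q)$, bounding each increment $N_2$ by its value at the largest scale using monotonicity, and converting back to $\Psi$, one gets the desired bound with all the balls that appear still contained in $B(x,10n\ell(Q))$. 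Combining the two regimes and taking the supremum over $x\in Q$ and $r\le\ell(Q)$ finishes the proof.

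The only point requiring any care is the comparable-scale regime: there the inner and outer radii differ only by a bounded factor, so the almost-monotonicity inequality of Proposition~\ref{prop almost monotonicity} cannot be applied directly with the ratio $10n$, and one must instead invoke the exact monotonicity of the $L^2$-averaged frequency together with \eqref{eq 2-infty eqce}---exactly the mechanism used to prove Proposition~\ref{prop almost monotonicity} itself. This is purely technical rather than a genuine obstacle; indeed, if one is content to replace the constant $10n$ on the right-hand side by a slightly larger dimensional constant (harmless in all our applications, where $\ell(Q)$ is small compared with the radius on which $u$ is harmonic), the comparable-scale regime can instead be dispatched by a single application of the Remark with an enlarged ratio.
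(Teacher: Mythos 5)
Your proof is correct and follows essentially the same route as the paper: the same case split into $r<\ell(Q)/(10n)$ (handled by the Remark, i.e.\ almost monotonicity with ratio $10n$) and $\ell(Q)/(10n)\le r\le\ell(Q)$ (handled by enlarging to the radii $\ell(Q)/(10n)$ and $10n\ell(Q)$ and splitting off $D_{\ell(Q)}(x)$). The only difference is cosmetic: where the paper disposes of the leftover quotient $\log\bigl(\sup_{B(x,\ell(Q))}|\nabla u|/\sup_{B(x,\ell(Q)/(10n))}|\nabla u|\bigr)$ with a one-line appeal to monotonicity, you re-derive that bound by hand via the exactly monotone $L^2$ frequency together with \eqref{eq 2-infty eqce}, which is precisely the mechanism behind Proposition \ref{prop almost monotonicity} and costs only a dimensional constant in place of the paper's factor $3$.
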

    \begin{proof}
        First, consider the case when the value of $r$ achieving the supremum satisfies $r\le \ell(q)/10n$. Then we have that
        \[
        N(Q)\le \tilde{C}_{\epsilon}+2\sup_{x\in Q} \log \frac{\sup_{B(x, 10 n \ell(Q))} |\nabla u|}{\sup_{B(x,\ell(Q))} |\nabla u|}.
        \]
        Now, suppose instead that $r>\ell(q)/10n$. Then we have that
        \[
        N(Q)\le \sup_{x\in Q} \log \frac{\sup_{B(x, 10 n \ell(Q))} |\nabla u|}{\sup_{B(x,\ell(Q)/10n)}|\nabla u|}\le C'+3\sup_{x\in Q} \log \frac{\sup_{B(x, 10 n \ell(Q))} |\nabla u|}{\sup_{B(x,\ell(Q))} |\nabla u|}.
        \]
        where we used monotonicity to control the quotient
        \[
        \log \frac{\sup_{B(x, \ell(Q))} |\nabla u|}{\sup_{B(x,\ell(Q)/10n)}|\nabla u|}.
        \]
    \end{proof}
A further corollary to the monotonicity allows us to estimate ratios of maximum values over vastly different scales when we know the frequency on the smaller scale.
\begin{corollary}\label{cor big scale doubling}
    Suppose $N(x,tr)\ge N>N_0$ where $N_0$ is universal and $t<1/2$. Then
    \[
    \sup_{B(x,tr)}|\nabla u| \le Ct^{N/6}\sup_{B(x,r)} |\nabla u|.
    \]
\end{corollary}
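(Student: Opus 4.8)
The plan is to convert the $L^{\infty}$ doubling hypothesis for $\nabla u$ at the scale $tr$ into a lower bound for an $L^{2}$ frequency at that scale, to propagate that bound to every scale in $[tr,r]$ using the true monotonicity of the $L^{2}$ frequency, to telescope the resulting multiplicative growth across the $\approx\log(1/t)$ scales separating $tr$ from $r$, and finally to translate back into an $L^{\infty}$ estimate. The hypothesis $N>N_{0}$ will be used only at the very end, to absorb the $O(1)$ errors incurred in the two $L^{2}$--$L^{\infty}$ conversions and in the rounding of $\log(1/t)$ to an integer number of scales.

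Fix a ratio $L$ (for concreteness $L=3$) and set $\widetilde{N}_{2}(x,\rho):=\log\frac{\langle|\nabla u|\rangle_{B(x,L\rho)}}{\langle|\nabla u|\rangle_{B(x,\rho)}}$. As recalled in the proof of Proposition~\ref{prop almost monotonicity}, $\widetilde{N}_{2}(x,\cdot)$ is nondecreasing, and so is $\rho\mapsto\langle|\nabla u|\rangle_{B(x,\rho)}$. Applying \eqref{eq 2-infty eqce} to the components of $\nabla u$ gives $\langle|\nabla u|\rangle_{B(x,\rho)}\le\sup_{B(x,\rho)}|\nabla u|\le C_{n,\epsilon}\,\langle|\nabla u|\rangle_{B(x,(1+\epsilon)\rho)}$ for each $\epsilon>0$. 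Using the second inequality (with $\epsilon=\tfrac12$) to bound $\langle|\nabla u|\rangle_{B(x,3tr)}$ from below by $\sup_{B(x,2tr)}|\nabla u|$, and the first to bound $\langle|\nabla u|\rangle_{B(x,tr)}$ from above by $\sup_{B(x,tr)}|\nabla u|$, the hypothesis $N(x,tr)=\log\frac{\sup_{B(x,2tr)}|\nabla u|}{\sup_{B(x,tr)}|\nabla u|}\ge N$ yields $\widetilde{N}_{2}(x,tr)\ge N-c_{0}$ with $c_{0}=c_{0}(n)$; by monotonicity, $\widetilde{N}_{2}(x,\rho)\ge N-c_{0}$ for all $\rho\ge tr$.

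Next, take $\rho_{0}$ slightly larger than $tr$ (so that \eqref{eq 2-infty eqce} gives $\langle|\nabla u|\rangle_{B(x,\rho_{0})}\ge c_{n}\sup_{B(x,tr)}|\nabla u|$), let $k$ be the largest integer with $L^{k}\rho_{0}\le r$, and telescope along the scales $\rho_{0},L\rho_{0},\dots,L^{k}\rho_{0}$: each ratio $\langle|\nabla u|\rangle_{B(x,L^{j+1}\rho_{0})}/\langle|\nabla u|\rangle_{B(x,L^{j}\rho_{0})}$ equals $e^{\widetilde{N}_{2}(x,L^{j}\rho_{0})}\ge e^{N-c_{0}}$, so $\langle|\nabla u|\rangle_{B(x,L^{k}\rho_{0})}\ge e^{k(N-c_{0})}\langle|\nabla u|\rangle_{B(x,\rho_{0})}$. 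Bounding the left side by $\sup_{B(x,r)}|\nabla u|$ and the right side from below by $c_{n}e^{k(N-c_{0})}\sup_{B(x,tr)}|\nabla u|$, we obtain $\sup_{B(x,tr)}|\nabla u|\le C_{n}e^{-k(N-c_{0})}\sup_{B(x,r)}|\nabla u|$. Once $t$ lies below a fixed threshold $t_{0}=t_{0}(n)$ we have $k\ge1$ and $\log_{L}(1/t)-2\le k\le\log_{L}(1/t)$, whence $e^{-kN}\le e^{2N}t^{N/\log L}$ and $e^{kc_{0}}\le t^{-c_{0}/\log L}$; choosing $t_{0}$ small enough that $\log(1/t)$ dominates the $O(1)$ scale loss, a short computation turns the last display into $\sup_{B(x,tr)}|\nabla u|\le t^{N/6}\sup_{B(x,r)}|\nabla u|$ provided $N_{0}=N_{0}(n)$ is large. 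For $t\in[t_{0},1/2)$ the claim is immediate from one step of doubling: $\sup_{B(x,tr)}|\nabla u|\le e^{-N}\sup_{B(x,2tr)}|\nabla u|\le e^{-N}\sup_{B(x,r)}|\nabla u|\le t^{N/6}\sup_{B(x,r)}|\nabla u|$, using $2tr<r$ and, for such $t$, $e^{-1}<t^{1/6}$.

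The one step requiring genuine care is this final bookkeeping: the factor $e^{2N}$ coming from rounding $\log_{L}(1/t)$ down to $k$ integer scales, together with the additive $c_{0}$ from the $L^{2}$--$L^{\infty}$ conversions, must be dominated by the main gain $t^{N/\log L}$ --- which is precisely where one uses $N>N_{0}$ with $N_{0}$ dimensional, and where the deliberately loose exponent $1/6$ (rather than something closer to $1/\log L$) leaves the necessary slack. One must also fix the threshold $t_{0}$ separating the telescoping and one-step regimes small enough, but large enough that $e^{-1}<t_{0}^{1/6}$, for both estimates to apply.
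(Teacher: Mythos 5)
Your proof is correct and follows essentially the paper's route: both arguments propagate the large doubling index at scale $tr$ to every larger scale via monotonicity of the frequency, telescope the resulting growth across the $\approx\log(1/t)$ geometric scales between $tr$ and $r$, and use $N>N_0$ together with the deliberately lossy exponent $1/6$ to absorb the $O(1)$ errors. The only (cosmetic) difference is that the paper invokes its $L^\infty$ almost-monotonicity (Proposition \ref{prop almost monotonicity}) at each dyadic scale, losing a factor of $2$ per comparison, whereas you telescope the genuinely monotone $L^2$ frequency with ratio $3$ and convert between $L^2$ and $L^\infty$ only at the two endpoint scales, which is marginally sharper but structurally the same argument.
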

\begin{proof}
    Assume first $t=2^{-j}$ for some natural number $j$. By Proposition \ref{prop almost monotonicity}, we have for any $i\ge1$ that
    \[
    N(x,2^itr)\ge N(x,tr)/2-c'\ge N/3,
    \]
    provided that $N_0$ is chosen to be large enough. It follows that
    \[
    \log\frac{\sup\limits_{B(x,r)}|\nabla u|}{\sup\limits_{B(x,2^{-j}r)}|\nabla u|}=\sum_{i=1}^j N(x,2^{-i}r)\ge jN/3.
    \]
    Rearranging gives the claimed inequality. For more general values of $t$, with $2^{-j}<t<2^{-(j+1)}$, we use the simple inequality
    \[
    \frac{\sup\limits_{B(x,r)}|\nabla u|}{\sup\limits_{B(x,r/2)}|\nabla u|}\le \frac{\sup\limits_{B(x,r)}|\nabla u|}{\sup\limits_{B(x,2^{j-1}tr)}|\nabla u|}
    \]
    and repeat the argument by lower bounding
    \[
    \log\frac{\sup\limits_{B(x,r)}|\nabla u|}{\sup\limits_{B(x,tr)}|\nabla u|}\ge N(x,r/2)+\sum_{i=0}^{j-2} N(x,2^{i}tr)\ge jN/3.
    \]
\end{proof}
    
	The general strategy is to show that the lower bound on the Hausdorff content of the set $\{|\nabla u|<\epsilon\}$ implies that the doubling index is sufficiently large, which in turn implies a propagation of smallness result. This second implication is based on the observation that if we have the normalization $\Vert\nabla u\Vert_{L^{\infty}(B_1)}=1$ and $N(0,1/2)>\gamma\log(1/\epsilon)$ then
	\begin{equation}\label{eq final implication}
	\epsilon^{-\gamma} < \frac{\sup_{B_1(0)} |\nabla u|}{\sup_{B_{1/2}(0)} |\nabla u|}=\frac{1}{\sup_{B_{1/2}(0)}|\nabla u|},
	\end{equation}
	and, hence, $\sup_{B_{1/2}(0)}|\nabla u|<\epsilon^{\gamma}$. 
	
	\section{Proof of Theorem \ref{thm:prop_of_smallness}}
	\subsection{Codimension $2-\delta$ hyperplane lemma}
	The original hyperplane lemma of \cite{LogPoly} shows a semiadditivity property for the doubling index over cubes distributed over a codimension $1$ hyperplane. Using Theorem \ref{thm:propagation_smallness_hyperplane_Malinnikova} we can prove a similar semiadditivity result for gradients of harmonic functions, but in the setting where the cubes are covering a set with positive Riesz $(n-2+\delta)$ capacity.
	
	In the sequel, let $Q$ be the unit cube in $\R^n$ and assume $u$ is a harmonic function defined over $100Q \subset \mathbb R^n$. Throughout this section, we will assume $(2A+1)$ is a power of $3$  and we will say that a cube $q$ descended from $Q$ is \textit{bad} if its doubling index $N(q)$ is larger than a fixed constant $N$. We will think of $N$ as being half of the frequency of $\nabla u$ over $Q$. 
	\begin{lemma}[Hyperplane lemma with Riesz capacities]
		\label{lemma:hyperplane_riesz}
		Let $m >0$, $\delta \in (0,1)$, and $A$ be a positive integer. Divide $Q$ into $(2 A+1)^{n}$ equal subcubes $q_{i}$ with side-length $\frac{1}{2 A+1}$ and denote by $q_{i, 0}$ the cubes that have nonempty intersection with the hyperplane $\{x_{n}=0\}$. 
		There exist $A_{0} = A_0(m, \delta, n)$ and $N_{0} = N_0(n)$ such that if $A>A_{0}$ and $N>N_{0}$ and
		\[
		\operatorname{Cap}_{n-2+\delta}\left(\{x_n=0\}\cap \bigcup_{N(q_{i,0}) > N} q_{i,0}\right) \geq m
		\]
        then we have that $N(2Q)>2 N$.
	\end{lemma}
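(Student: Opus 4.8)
The plan is to argue directly rather than by contradiction. We will show that the capacity hypothesis makes $|\nabla u|$ extraordinarily small on a positive $(n-2+\delta)$-capacity subset of the hyperplane $\{x_n=0\}$; the hyperplane propagation of smallness of Theorem \ref{thm:propagation_smallness_hyperplane_Malinnikova} then forces $\sup_{2Q}|\nabla u|$ to be tiny relative to $\Vert\nabla u\Vert_{L^\infty(100Q)}$; and that gap is precisely what large doubling index of $2Q$ means. Throughout write $\ell:=(2A+1)^{-1}$ for the common side length of the cubes $q_{i,0}$.

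The heart of the argument is the following \emph{smallness estimate for bad cubes}: there are $A_1=A_1(n)$, $N_0=N_0(n)$ and $c=c(n)>0$ such that if $A>A_1$, $N>N_0$, and $q$ is any child of $Q$ with $N(q)>N$, then
\[
\sup_q|\nabla u|\leq C(n)\,(2A+1)^{-cN}\sup_{2Q}|\nabla u|.
\]
The crucial feature, and the improvement over the $e^{-cN}$-type estimates that only reach codimension $2-c_n$, is that the gain is a \emph{power} of $2A+1$. To prove it, use the corollary on comparability of maximal and ordinary doubling to find $x_0\in q$ with $\log\frac{\sup_{B(x_0,10n\ell)}|\nabla u|}{\sup_{B(x_0,\ell)}|\nabla u|}\gtrsim N$; passing to the $L^2$-doubling $N_2$ and using its genuine (not merely almost) monotonicity in the radius — as in the proof of Proposition \ref{prop almost monotonicity} — this forces $N_2(x_0,r)\gtrsim N/\log n$ at \emph{every} scale $r$ between $\sim n\ell$ and $\sim 1$. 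Summing the doubling over the intervening dyadic scales, or equivalently invoking Corollary \ref{cor big scale doubling}, gives $\sup_{B(x_0,2n\ell)}|\nabla u|\leq C(n\ell)^{c'N}\sup_{B(x_0,1/10)}|\nabla u|\leq C(n\ell)^{c'N}\sup_{2Q}|\nabla u|$, and since $q\subset B(x_0,2n\ell)$ and $(n\ell)^{c'N}\leq(2A+1)^{-cN}$ once $A$ is large, the estimate follows. Note the hyperplane plays no role in this step, which applies to any bad child of $Q$.

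Granting the smallness estimate, set $E:=\{x_n=0\}\cap\bigcup_{N(q_{i,0})>N}q_{i,0}$, the compact set in the hypothesis. Since every point of $E$ lies in some bad $q_{i,0}$, the estimate yields $\sup_E|\nabla u|\leq C(n)(2A+1)^{-cN}\sup_{2Q}|\nabla u|$. As $E$ lies in a hyperplane, sits inside a fixed ball, and has $\operatorname{Cap}_{n-2+\delta}(E)\geq m$, a harmless rescaling lets us apply Theorem \ref{thm:propagation_smallness_hyperplane_Malinnikova} with $K$ a fixed compact set containing $2Q$: there are $C=C(n)$ and $\alpha=\alpha(m,\delta,n)\in(0,1)$ with $\sup_{2Q}|\nabla u|\leq C\,\Vert\nabla u\Vert_{L^\infty(100Q)}^{1-\alpha}(\sup_E|\nabla u|)^{\alpha}$. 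Plugging in the previous bound and absorbing $(\sup_{2Q}|\nabla u|)^{\alpha}$ to the left gives $\sup_{2Q}|\nabla u|\leq C'(2A+1)^{-\beta N}\Vert\nabla u\Vert_{L^\infty(100Q)}$ with $\beta:=\frac{c\alpha}{1-\alpha}=\beta(m,\delta,n)>0$. Finally, this says that $|\nabla u|$ grows by a factor $\gtrsim(2A+1)^{\beta N}$ in passing from $2Q$ to $100Q$; evaluating $N(2Q)$ at a suitable base point and radius turns this growth into $N(2Q)\geq\beta N\log(2A+1)-C(n)$. Choosing $A_0=A_0(m,\delta,n)>A_1$ so that $\beta\log(2A_0+1)>3$, and $N_0=N_0(n)$ so that $3N-C(n)>2N$ for $N>N_0$, we conclude $N(2Q)>2N$ whenever $A>A_0$ and $N>N_0$.

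I expect the smallness estimate for bad cubes to be the main obstacle. Getting a genuine power of $2A+1$ in the exponent — rather than a bare constant — is exactly where the monotonicity of the frequency has to be exploited quantitatively, and some care is needed to see that $N(q_{i,0})>N$, a condition a priori witnessed only at a single point and a single scale $\leq\ell$, really does force $\sup_{q_{i,0}}|\nabla u|$, hence $\sup_{q_{i,0}\cap\{x_n=0\}}|\nabla u|$, to be small, so that $E$ is a legitimate smallness set for Theorem \ref{thm:propagation_smallness_hyperplane_Malinnikova}. Everything else is routine bookkeeping with the doubling index.
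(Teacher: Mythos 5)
Your proposal follows the paper's own proof in all essentials: the $(2A+1)^{-cN}$ smallness estimate on bad cubes obtained from the comparability corollary together with Corollary \ref{cor big scale doubling}, then Theorem \ref{thm:propagation_smallness_hyperplane_Malinnikova} applied to $E=\{x_n=0\}\cap\bigcup_{N(q_{i,0})>N}q_{i,0}$, then conversion of the resulting multiplicative gap into a lower bound on $N(2Q)$; absorbing $(\sup_{2Q}|\nabla u|)^{\alpha}$ to the left is only a cosmetic variant of the paper's exponent bookkeeping.

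The one step that does not work as written is the last conversion. Having put $\Vert\nabla u\Vert_{L^\infty(100Q)}$ on the right, you assert that $\sup_{2Q}|\nabla u|\le C'(2A+1)^{-\beta N}\Vert\nabla u\Vert_{L^\infty(100Q)}$ turns into $N(2Q)\ge\beta N\log(2A+1)-C(n)$ by ``evaluating $N(2Q)$ at a suitable base point and radius''. But the balls in the definition of $N(2Q)$ have centers in $2Q$ and outer radius $10n$ times the inner radius, and the inner ball must be contained in $2Q$ (otherwise its supremum is not controlled by $\sup_{2Q}|\nabla u|$), so the inner radius is at most about $1$ and the numerator ball is contained in $B(0,10n+\sqrt n)$. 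Since $100Q$ has circumradius $50\sqrt n$, for $3\le n\le 24$ no admissible pair reaches the part of $100Q$ where the maximum might sit, and the implication is genuinely false there: adding to $u$ a tiny multiple of a homogeneous harmonic polynomial of huge degree keeps $N(2Q)$ bounded while making $\sup_{100Q}|\nabla u|/\sup_{2Q}|\nabla u|$ arbitrarily large. The repair is immediate and is what the paper does: keep the majorizing norm as the supremum over the ball $B\approx B(0,2\sqrt n)$ in which Theorem \ref{thm:propagation_smallness_hyperplane_Malinnikova} is applied, and let the conclusion bound $\sup_Q|\nabla u|$ (or $\sup_{B(0,1)}|\nabla u|$) by $C(2A+1)^{-c\alpha N}\sup_B|\nabla u|$; since $B\subset B(0,10n)$ and $B(0,1)\subset 2Q$, the pair $x=0$, $r=1$ gives $N(2Q)\ge c\alpha N\log(2A+1)-C\ge 2N$ once $A$ is large depending on $\alpha(m,\delta,n)$. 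With that substitution your argument coincides with the paper's.
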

	
	The proof follows the original proof of Logunov \cite[Lemma 4.1]{LogPoly} but with the obvious modification of using Theorem \ref{thm:propagation_smallness_hyperplane_Malinnikova} instead of propagation of smallness from a hyperplane. 
	\begin{proof}
		Let $M = \sup_{2Q} |\nabla u|$, and let $x_i$ be a point in $q_{i,0}$ where the supremum of $N(x_i,\ell(q_{i,0}))$ is achieved. For any bad cube $q_{i,0}$ {(cubes with $N(q_{i,0})>N$)}, we have
		\[
		\sup_{2 q_{i,0}} |\nabla u| \leq \sup_{B(x_i, 2\sqrt n/(2A+1) )} |\nabla u| \leq \sup_{B(x_i, 1/(20n))} |\nabla u| \left (  \frac{{40 n^{3/2}}}{2A +1}\right)^{N/C-C'} 
		\leq M 2^{- C'' N \log A},
		\]
		where we used Corollary \ref{cor big scale doubling} in the second inequality. Using Theorem \ref{thm:propagation_smallness_hyperplane_Malinnikova} with 
		\[E := \{x_n=0\}\cap\left(\bigcup\limits_{q_{i,0} \text{ bad}} q_{i,0}\right),
		\]
		we can upper bound $|\nabla u(x)|$ on $Q$ by $M 2^{-C''N\alpha \log A}$ for some $\alpha>0$ independent of $A$. Then, if $A$ is large enough depending on $\alpha$, we get that $N(2Q)$ must be larger than $2N$.
		
	\end{proof}
	The previous lemma gives us information on the Riesz capacity of the cubes with large doubling index. We will use this to show that if $N(2Q)\leq 2N$, then the set of bad cubes {has to be} small in the sense that they make up a small fraction of all the cubes. 
	
	\begin{lemma}[Hyperplane lemma with codimension $(2-\delta)$ Hausdorff measure]
		\label{lemma:codimension_hyperplane}
		Assume $N(2Q) \leq 2N$, and fix $\delta>0$. Divide $Q$ into $(2 A+1)^{n}$ equal subcubes $q_{i}$ with side length $\frac{1}{2 A+1}$ and denote by $q_{i, 0}$ the cubes that have nonempty intersection with the hyperplane $\{x_{n}=0\}$. 
		Then, there exist $A_0(\delta,n, \eta)$ and $N_0(n)$ such that if $A > A_0$, and $N>N_{0}$, the number of cubes $q_{i,0}$ satisfying $N(q_{i,0})> N$ is smaller than  $\eta(2A+1)^{n-2+\delta}$.
	\end{lemma}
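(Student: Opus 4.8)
\textit{Proof strategy.}
The plan is to argue by contradiction: assuming the number of bad cubes $q_{i,0}$ is at least $\eta(2A+1)^{n-2+\delta}$, we will locate a descendant cube of $Q$ that is bisected by $\{x_n=0\}$ and subdivided into many cubes of the finest scale, on which the hypothesis of Lemma \ref{lemma:hyperplane_riesz} holds with some threshold $m>0$; that lemma then pushes the doubling index of its dilate above $2N$, and since the maximal doubling index of a cube is monotone under inclusion — it is a supremum over pairs $(x,r)$ with $x$ in the cube and $r$ at most its side length — this contradicts $N(2Q)\le 2N$. We may assume $\delta<1$, since for $\delta\ge 1$ the quantity $\eta(2A+1)^{n-2+\delta}$ exceeds the total number $(2A+1)^{n-1}$ of cubes $q_{i,0}$ once $A$ is large. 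Write $\sigma=n-2+\delta$, fix an auxiliary exponent $\sigma'=n-2+\tfrac\delta2$, which lies in $(n-2,\sigma)$, and let $\ell=(2A+1)^{-1}$.

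\textit{A stopping-time selection of the good sub-cube.}
For a triadic descendant cube $P$ of $Q$ bisected by $\{x_n=0\}$ (this is possible at every triadic scale because $2A+1$ is odd), write $t(P)=\ell(P)/\ell$ for its number of subdivisions per side and set $\rho(P)=t(P)^{-\sigma}\,\#\{q_{i,0}\subset P:\ q_{i,0}\text{ bad}\}$; the assumption gives $\rho(Q)\ge\eta$. Run a stopping time starting from $Q$: from the current cube $P$, if some such descendant $P'\subsetneq P$ has $\rho(P')>\Theta$ for a large constant $\Theta$ fixed below, pass to one of them; otherwise stop. Side lengths strictly decrease and a single finest cube has $\rho\le 1\le\Theta$, so the process stops at a cube $P^\ast$. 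Two features matter. First, if $\rho(P)>\Theta$ then $\Theta<t(P)^{-\sigma}\#\{\text{bad }q\subset P\}\le t(P)^{\,n-1-\sigma}=t(P)^{1-\delta}$, so $t(P)>\Theta^{1/(1-\delta)}$; thus, taking $\Theta$ large enough in terms of the constant $A_0$ from Lemma \ref{lemma:hyperplane_riesz} with exponent $\delta/2$ and an absolute capacity threshold, we ensure $t(P^\ast)$ is large enough to feed into that lemma whenever $P^\ast\ne Q$ (if $P^\ast=Q$ we instead use $t(P^\ast)=2A+1$). Second, $P^\ast$ is $\Theta$-regular: every sub-cube of $P^\ast$ bisected by $\{x_n=0\}$ has density at most $\Theta$.

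\textit{From regularity to a capacity bound, and the contradiction.}
Now $\Theta$-regularity produces a Frostman measure. Let $\mu$ be $(n-1)$-dimensional Lebesgue measure restricted to $\{x_n=0\}\cap\bigcup\{q_{i,0}:q_{i,0}\subset P^\ast\text{ bad}\}$; since the finest cubes are essentially disjoint, $\mu(\mathbb R^n)\asymp\rho(P^\ast)\,\ell^{\,n-1-\sigma}\,\ell(P^\ast)^\sigma$, and $\Theta$-regularity (with the trivial estimate at radii below $\ell$, and the bound $\mu(\mathbb R^n)$ at radii above $\ell(P^\ast)$) gives $\mu(B(x,r))\lesssim_n K\,\ell^{\,n-1-\sigma}\,r^\sigma$ for all $x,r$, with $K=\max(\Theta,\rho(P^\ast))$. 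Hence $\nu=C_n^{-1}K^{-1}\ell^{\,\sigma-(n-1)}\mu$ is a Frostman measure of dimension $\sigma$ on that set with $\nu(\mathbb R^n)\asymp K^{-1}\rho(P^\ast)\ell(P^\ast)^\sigma$; since $\sigma'<\sigma$ the standard energy estimate yields $I_{\sigma'}(\nu)\lesssim_{n,\delta}\nu(\mathbb R^n)\,\ell(P^\ast)^{\sigma-\sigma'}$, so $\operatorname{Cap}_{\sigma'}\ge\nu(\mathbb R^n)^2/I_{\sigma'}(\nu)\gtrsim_{n,\delta}K^{-1}\rho(P^\ast)\,\ell(P^\ast)^{\sigma'}$. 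Rescaling $P^\ast$ to the unit cube, by the scaling of Riesz capacity the rescaled bad set has $\operatorname{Cap}_{\sigma'}\gtrsim_{n,\delta}\rho(P^\ast)/K=\min\{1,\rho(P^\ast)/\Theta\}$, which is bounded below by an absolute constant if $P^\ast\ne Q$ (then $\rho(P^\ast)>\Theta$) and by $\gtrsim\eta/\Theta$ if $P^\ast=Q$; in either case it is at least some $m=m(n,\delta,\eta)>0$. We apply Lemma \ref{lemma:hyperplane_riesz} to $P^\ast$ rescaled to the unit cube — with exponent $\delta/2$, threshold $m$, subdivision parameter $t(P^\ast)$ (a power of $3$, large enough), and the same $N>N_0$ — to get $N(2P^\ast)>2N$. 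Since $2P^\ast\subseteq 2Q$ with $\ell(2P^\ast)\le\ell(2Q)$, monotonicity gives $N(2P^\ast)\le N(2Q)\le 2N$, a contradiction; this forces the number of bad cubes to be less than $\eta(2A+1)^{n-2+\delta}$.

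\textit{The main obstacle.}
The step I expect to be delicate is exactly the coordination of constants sketched above. The capacity we can extract scales like $\rho(P^\ast)/\Theta$, hence degrades as $\Theta$ grows, while $\Theta$ must be taken large precisely to guarantee that the selected cube carries enough subdivisions for Lemma \ref{lemma:hyperplane_riesz}, whose own $A_0$ grows as its capacity threshold shrinks — an apparent circularity. The resolution, and the reason for passing to the smaller exponent $\delta/2$ at all, is that once a single zoom has occurred the density of $P^\ast$ itself exceeds $\Theta$, so $\rho(P^\ast)/\Theta\gtrsim 1$ with an absolute constant and Lemma \ref{lemma:hyperplane_riesz} can be invoked with an $\eta$-independent threshold; the only $\eta$-dependence that remains is in the degenerate case $P^\ast=Q$, and there it is harmless since $A_0(\delta,n,\eta)$ in the statement may depend on $\eta$. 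One should also verify the routine points that the cubes bisected by $\{x_n=0\}$ form a triadic family at every scale, that $100P^\ast\subseteq 100Q$ so that $u$ is harmonic on the neighborhood required by Lemma \ref{lemma:hyperplane_riesz}, and the mass and energy estimates for $\nu$.
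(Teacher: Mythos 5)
Your argument is correct, and it runs on the same engine as the paper's proof --- a triadic stopping/minimality selection of a high-density cube in the hyperplane, a Frostman-measure-to-energy-to-capacity estimate exploiting the gap between the exponents $n-2+\delta$ and $n-2+\delta/2$, and Lemma \ref{lemma:hyperplane_riesz} applied at the selected scale --- but you arrange the contrapositive differently. The paper normalizes $\mathcal H^{n-1}$ on the bad set, uses Lemma \ref{lemma:hyperplane_riesz} in contrapositive form (capacity of the bad set must be small since $N(2Q)\le 2N$, and again at the scale of the minimal cube $\wt P$) to feed its Claim~1, needs Claim~2 to guarantee a scale separation $K\gg 1$, and lands the contradiction against the minimality bound \eqref{nu bound}. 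You instead run a counting-density stopping time with threshold $\Theta$, read off a Frostman bound of exponent $n-2+\delta$ directly from the $\Theta$-regularity of the stopped cube $P^\ast$, convert it into a lower bound $\operatorname{Cap}_{n-2+\delta/2}\gtrsim \min\{1,\rho(P^\ast)/\Theta\}$ after rescaling, and apply Lemma \ref{lemma:hyperplane_riesz} a single time to get $N(2P^\ast)>2N$, contradicting $N(2P^\ast)\le N(2Q)\le 2N$ by the definitional monotonicity of the maximal doubling index (the same monotonicity the paper uses elsewhere, and $2P^\ast\subset 2Q$, $100P^\ast\subset 100Q$ are fine as you note). What your arrangement buys is the elimination of the analogue of Claim~2 and of the second invocation of the capacity-smallness step, together with a transparent resolution of the constant dependencies: the threshold fed to Lemma \ref{lemma:hyperplane_riesz} is absolute once a single zoom has occurred, and the $\eta$-dependence survives only through $A_0$, exactly as the statement allows; the paper's arrangement keeps capacity smallness as the driving quantity so that its Claim~1 can be quoted verbatim at both scales.

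One small repair: your reduction to $\delta<1$ is justified by saying $\eta(2A+1)^{n-2+\delta}$ exceeds the total count $(2A+1)^{n-1}$ of hyperplane cubes for large $A$, which is true for $\delta>1$ but fails at $\delta=1$ when $\eta<1$. This is harmless --- the conclusion is monotone in $\delta$ (fewer than $\eta(2A+1)^{n-2+\delta'}$ bad cubes with $\delta'<\delta$ is a stronger statement), so the case $\delta\ge1$ follows from the case $\delta'=1/2$, say --- but state it that way. The remaining points you flag as routine (the hyperplane-bisected triadic family at every scale, essential disjointness making neighboring same-scale cubes contribute zero $\mu$-mass, the trivial $(n-1)$-dimensional bound at radii below $(2A+1)^{-1}$, the mass identity $\mu(\R^n)=\rho(P^\ast)\ell^{\,n-1-\sigma}\ell(P^\ast)^{\sigma}$, and the scale invariance of the doubling index under the rescaling of $P^\ast$) all check out.
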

	
	Note that the original hyperplane lemma \cite[Corollary 4.2]{LogPoly} is precisely this lemma with $\delta=1$, and also holds for more general elliptic operators with Lipschitz coefficients. For our applications, it will suffice to take $\eta$ to be a small constant.
	\begin{proof}
		Fix $\delta>0$, $N_0$ large enough so that Lemma \ref{lemma:hyperplane_riesz} can be applied, and fix $A$ large enough, and assume for the sake of contradiction that $u$ is a harmonic function satisfying the hypotheses of the lemma with too many bad cubes $q_{i,0}$ satisfying $N({q_{i,0}})>N$. We will prove two intermediate claims.
		
		\begin{claim}[Claim 1] \label{claim:1}
			Fix $C_n>0$ and let $\mu$ be a measure of the form 
			\[
			\mu=\frac{\mathcal{H}^{n-1}|_E}{\mathcal{H}^{n-1}(E)}
			\]
			where $E\subset\{x_n=0\}$ is a nonempty finite union of lattice cubes of side length $(2A+1)^{-1}$, all contained in the unit cube.  If $\operatorname{Cap}_{n-2+\delta/4}(E)$  is small enough depending on $C_n$  (but independent of $A$), then there exists a ball $\wt B$ of the hyperplane $\{x_n = 0\}$
			such that $\mu(\wt B) > C_nr(\wt B)^{n-2+\delta/2}$. 
		\end{claim}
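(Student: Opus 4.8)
The plan is to establish the contrapositive of the claim: I will show that if $\mu(\wt B)\le C_n\,r(\wt B)^{n-2+\delta/2}$ holds for \emph{every} ball $\wt B$ of the hyperplane $\{x_n=0\}$, then $\operatorname{Cap}_{n-2+\delta/4}(E)$ is bounded below by a positive constant depending only on $C_n$, $n$ and $\delta$, and in particular not on the lattice scale $A$. Since $E$ is a nonempty finite union of lattice cubes, $\mathcal H^{n-1}(E)$ is finite and positive, so $\mu$ is a probability measure compactly supported on $E$ and is thus a legitimate competitor in the definition of Riesz capacity; hence $\operatorname{Cap}_{n-2+\delta/4}(E)\ge I_{n-2+\delta/4}(\mu)^{-1}$, and it suffices to bound the energy $I_{n-2+\delta/4}(\mu)$ from above under the assumed ball-growth condition.

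The key step is the classical Frostman-type potential estimate. I would set $s=n-2+\delta/2$ and $t=n-2+\delta/4$; since $n\ge 2$ and (as we may assume) $\delta<1$, we have $0<t<s<n-1$, and in particular $s/t>1$. For fixed $x\in\{x_n=0\}$, the layer-cake formula together with the growth bound (applicable since $\mu$ is carried by the hyperplane, so $\wt B(x,\rho)$ may be read as the hyperplane ball of radius $\rho$) gives
\begin{align*}
\int |x-y|^{-t}\, d\mu(y) &= \int_0^\infty \mu\bigl(\wt B(x,\lambda^{-1/t})\bigr)\, d\lambda \\
&\le 1 + \int_1^\infty C_n\, \lambda^{-s/t}\, d\lambda = 1 + \frac{C_n t}{s-t} \le 1 + \frac{4nC_n}{\delta},
\end{align*}
where on $\{\lambda\le 1\}$ one simply uses $\mu\le 1$ and on $\{\lambda\ge1\}$ the assumed growth bound, the final integral converging because $s/t>1$. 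Integrating this in $d\mu(x)$ and using $\mu(E)=1$ yields $I_{n-2+\delta/4}(\mu)\le 1+4nC_n/\delta$, a quantity depending only on $C_n$, $n$, $\delta$.

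Consequently $\operatorname{Cap}_{n-2+\delta/4}(E)\ge \bigl(1+4nC_n/\delta\bigr)^{-1}=:c(C_n,n,\delta)>0$, independent of $A$. Hence if $\operatorname{Cap}_{n-2+\delta/4}(E)<c(C_n,n,\delta)$, the growth bound must fail for some ball, producing a ball $\wt B$ of $\{x_n=0\}$ with $\mu(\wt B)>C_n\,r(\wt B)^{n-2+\delta/2}$, which is the assertion of the claim. I do not expect a genuine obstacle here: the only points requiring care are the bookkeeping on the exponents --- one needs $t<s<n-1$ so that the Riesz potential is $\mu$-integrable --- and the verification that the threshold $c$ is truly scale-free, which is immediate since the estimate uses only the normalization $\mu(E)=1$ and the postulated growth bound and never refers to $A$.
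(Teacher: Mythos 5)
Your proposal is correct and takes essentially the same approach as the paper: negate the conclusion, use the ball-growth bound to control the Riesz energy $I_{n-2+\delta/4}(\mu)$ by a constant depending only on $C_n$, $n$, $\delta$ (crucially independent of $A$), and conclude a positive lower bound on $\operatorname{Cap}_{n-2+\delta/4}(E)$. The only cosmetic difference is that you evaluate the potential via the layer-cake formula instead of the paper's integration by parts in $r$, which incidentally spares you the observation that $\mu(B(x,r))\,r^{-(n-2+\delta/4)}\to 0$ as $r\to 0$.
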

		 In the particular case $E$ is the union of bad cubes with side length $(2A+1)^{-1}$ intersected with the hyperplane $\{x_n=0\}$, Claim $1$ implies that there exists $A_0(n,\delta)$ such that if $A>A_0$, then we can find a ball $\wt B$ of the hyperplane satisfying $\mathcal H^{n-1}(\wt B)/\mathcal H^{n-1}(E) > C_n r(\wt B)^{n-2+\delta/2}$. This is because Lemma \ref{lemma:hyperplane_riesz} implies that $\operatorname{Cap}_{n-2+\delta/4}(E)$ can be taken to be arbitrarily small by choosing $A$ sufficiently large.
		\begin{proof}[Proof of Claim 1]
			
			Assume that for any ball $\wt B$ of the hyperplane $\{x_n=0\}$, we have $\mu(\wt B) \leq C_n r(\wt B)^{n-2+\delta/2}$.
			Then, for fixed $x\in E$, we get
			\begin{align*}
				\int_{E} |x-y|^{-(n-2+\delta/4)}\, d\mu(y)
				&= \mathcal{H}^{n-1}(E)^{-1} \int_{0}^{\sqrt{n-1}} \left( \int_{\partial B_r(x) \cap \{x_n=0\}}\mathbf{1}_E(y)r^{-(n-2+\delta/4)}\,d\mathcal{H}^{n-2}(y)  \right) \,dr \\
				&= \int_{0}^{\sqrt{n-1}} r^{-(n-2+\delta/4)}\left( \mathcal{H}^{n-1}(E)^{-1} \int_{\partial B_r(x) \cap \{x_n=0\}}\mathbf{1}_E(y)\,d\mathcal{H}^{n-2}(y)  \right) \,dr \\
				&= \int_0^{\sqrt{n-1}} r^{-(n-2+\delta/4)} \frac d {dr} \mu(\{y : |y-x|\leq r\}) \,dr
				\\
				&= C(n)\mu(E) - \int_0^{\sqrt{n-1}} \frac d {dr} r^{-(n-2+\delta/4)} \mu(\{y : |y-x|\leq r\}) \,dr\\
				&=C(n)+ (n-2+\delta/4) \int_0^{\sqrt{n-1}} r^{-(n-2+\delta/4)-1} \mu(B(x,r)) \, dr
			\end{align*}
			where we have used that $\lim_{r\to0} \mu(B(x,r))r^{-(n-2+\delta/4)}=0$ as $\mu$ is ($n-1$)-dimensional at scales below $(2A+1)^{-1}$. We will ignore the additive constant $C(n)$ since it is finite.
			We integrate the above identity in $x$ with respect to $d\mu$ using Fubini's Theorem
			\[
			C_{n,\delta}\int_E\int_0^{\sqrt{n-1}} r^{-(n-2+\delta/4)-1} \mu(B(x,r)) \, dr \, d\mu(x)
			=
			C_{n,\delta}\int_0^{\sqrt{n-1}} r^{-(n-2+\delta/4)-1} \int_E    \mu(B(x,r)) \, d\mu(x)\, dr 
			\]
			and finally, we bound
			\[
			\int_E \mu(B(x,r)) \,d\mu(x) \leq c_nr^{n-2+\delta/2} \mu(E) = C_nr^{n-2+\delta/2}.
			\]
			Thus, we obtain
			
			\[
			I_{n-2+\delta/4}(\mu) \le C_n \int_0^{\sqrt{n-1}} r^{\delta/4-1}\,dr =C_n C(n,\delta)<+\infty
			\]
			as we get a convergent integral
			and
			\[
			\operatorname{Cap}_{n-2+\delta/4}(E) \geq I_{n-2+\delta/4}(\mu)^{-1}\geq C_n^{-1}C(n,\delta)^{-1}>0.
			\]
		\end{proof}
		\begin{claim}[Claim 2]
			Fix $K>0$. There exists $A_0(K,\delta, \eta )$ such that if $A>A_0$, then any triadic cube $\wt Q$ in the hyperplane $\{x_n=0\}$ satisfying $\mu(\wt Q) > \ell(\wt Q)^{n-2+\delta/2}$ must also satisfy $\ell(\wt Q)> K(2A+1)^{-1}$.
		\end{claim}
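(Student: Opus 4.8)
The plan is to prove the contrapositive: I will show that once $A$ is taken large enough (depending on $K$, $\delta$, $\eta$ and $n$), every triadic cube $\wt Q$ in the hyperplane $\{x_n=0\}$ with $\ell(\wt Q)\le K(2A+1)^{-1}$ automatically satisfies $\mu(\wt Q)\le\ell(\wt Q)^{n-2+\delta/2}$. The only real ingredient is a lower bound for the normalizing factor $\mathcal H^{n-1}(E)$, and this is precisely where the hypothesis of Lemma \ref{lemma:codimension_hyperplane} that we are contradicting, and hence the parameter $\eta$, enters. In the situation at hand $E$ is the union of the bad cubes $q_{i,0}$ intersected with $\{x_n=0\}$, and by the assumption we are contradicting there are more than $\eta(2A+1)^{n-2+\delta}$ of them; since these are $(n-1)$-dimensional cubes of side $(2A+1)^{-1}$ with pairwise disjoint interiors, I would record the bound
\[
\mathcal H^{n-1}(E)\ \gtrsim_n\ \eta\,(2A+1)^{n-2+\delta}\,(2A+1)^{-(n-1)}\ =\ \eta\,(2A+1)^{-(1-\delta)}.
\]

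Next I would estimate $\mu(\wt Q)$ from above for an arbitrary triadic cube $\wt Q$ in the hyperplane. Since $E\subset\{x_n=0\}$, monotonicity of Hausdorff measure gives $\mathcal H^{n-1}(\wt Q\cap E)\le\mathcal H^{n-1}(\wt Q)\lesssim_n\ell(\wt Q)^{n-1}$, and this needs no case distinction between $\ell(\wt Q)$ above or below the lattice scale $(2A+1)^{-1}$. Dividing by the previous lower bound (the dimensional constants combine into a harmless factor that I would absorb into the $n$-dependence of $A_0$) yields
\[
\mu(\wt Q)\ =\ \frac{\mathcal H^{n-1}(\wt Q\cap E)}{\mathcal H^{n-1}(E)}\ \lesssim_n\ \frac{\ell(\wt Q)^{n-1}\,(2A+1)^{1-\delta}}{\eta}.
\]

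To conclude, under the assumption $\ell(\wt Q)\le K(2A+1)^{-1}$ I would split off the target power and use that $1-\delta/2>0$:
\[
\ell(\wt Q)^{n-1}(2A+1)^{1-\delta}\ =\ \ell(\wt Q)^{n-2+\delta/2}\cdot\ell(\wt Q)^{1-\delta/2}(2A+1)^{1-\delta}\ \le\ \ell(\wt Q)^{n-2+\delta/2}\,K^{1-\delta/2}\,(2A+1)^{-\delta/2},
\]
so that $\mu(\wt Q)\lesssim_n\eta^{-1}K^{1-\delta/2}(2A+1)^{-\delta/2}\,\ell(\wt Q)^{n-2+\delta/2}$. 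Since $\delta>0$, the prefactor tends to $0$ as $A\to\infty$, so choosing $A_0=A_0(K,\delta,\eta,n)$ large enough that it is $\le 1$ for all $A>A_0$ forces $\mu(\wt Q)\le\ell(\wt Q)^{n-2+\delta/2}$, which is exactly the contrapositive of the claim. I do not anticipate a genuine obstacle: the computation is elementary, and the one point to watch is that the factor $(2A+1)^{-\delta/2}$ is a true gain, which relies both on $\delta>0$ and on the lower bound for $\mathcal H^{n-1}(E)$ coming from the ``too many bad cubes'' hypothesis rather than merely from $E$ being nonempty.
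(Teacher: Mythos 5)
Your proof is correct and is essentially the paper's argument in contrapositive form: both use the lower bound $\mathcal H^{n-1}(E)\gtrsim\eta(2A+1)^{-(1-\delta)}$ coming from the ``too many bad cubes'' hypothesis, the trivial bound $\mathcal H^{n-1}(\wt Q\cap E)\le\ell(\wt Q)^{n-1}$, and the same elementary algebra exploiting $\delta>0$ to make the prefactor decay in $A$. No substantive difference from the paper's proof.
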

		\begin{proof}[Proof of Claim 2]
			By definition of $\mu$ and the assumption that the statement of Lemma \ref{lemma:codimension_hyperplane} fails (that is, $\mathcal H^{n-1}(E)\geq \eta (2A+1)^{n-2+\delta}/(2A+1)^{n-1}$), we have
			\[
			\mu(\wt Q) = \frac{\mathcal H^{n-1}(\wt Q \cap E)}{\mathcal H^{n-1}(E)}\leq  \frac{\mathcal H^{n-1}(\wt Q )}{\mathcal H^{n-1}(E)} 
			= \frac{ \ell(\wt Q)^{n-1}}{\mathcal H^{n-1}(E)} \leq 
			\frac{ \ell(\wt Q)^{n-1}}{\eta (2A+1)^{n-2+\delta}/ (2A+1)^{n-1}} = 
			\frac{ \ell(\wt Q)^{n-1}}{\eta (2A+1)^{-1+\delta}} .
			\]
			Hence,
			\[
			\ell(\wt Q)^{n-2+\delta/2} <  \frac{ \ell(\wt Q)^{n-1}}{\eta (2A+1)^{-1+\delta}} \quad \implies 
			(2A+1)\ell(\wt Q) > c(\delta,\eta ) (2A+1)^{1-\frac{1-\delta}{1-\delta/2}}.
			\]
			Choosing $A_0$ sufficiently large so that $c(\delta,\eta )(2A+1)^{1-\frac{1-\delta}{1-\delta/2}}>K$ gives the claim. 
		\end{proof}
		With these established, we can complete the proof of Lemma \ref{lemma:codimension_hyperplane}. Let 
		\[
		E:= \left(\bigcup_{N(q_{i,0})>N} q_{i,0} \right) \cap  \{x_n = 0\}, \quad \mbox{and}\quad \mu :=\frac{\mathcal H^{n-1}|_{E}}{\mathcal H^{n-1}(E)}.
		\]
        Let $\wt P$ be a minimal triadic cube in the hyperplane $\{x_n=0\}$ satisfying $\mu(\wt P) > \ell(\wt P)^{n-2+\delta/2}$. Although it may not be unique, we know there exists at least one of these cubes by Claim 1 and, by Claim 2, the side length of $\tilde{P}$ is at least $(2A+1)^{-1}$.
		
		Let $P$ denote the triadic cube in $\R^n$ with the same side length as $\wt P$ and intersecting it; that is, $P=\wt P\times [-\ell(P)/2,\ell(P)/2]$. Consider the original function $u$ restricted to $P$, and note $N<N(P)\leq2N$ by the definition of $E$.

		Let $K$ be such that $\ell(P)=K(2A+1)^{-1}$. Clearly, $K$ is a power of $3$ and, by Claim $2$, we can ensure that it is as large as we want by making $A$ larger. Define the measure $\nu := \mu(\wt P)^{-1}\cdot \mu|_{\wt P}$. Now, by minimality of $\wt P$, this measure satisfies for every triadic descendant cube $\wt R \subset \wt P$ that
		\begin{equation}\label{nu bound}
			\nu(\wt R) < \left(\frac{\ell(\wt R)}{\ell(\wt P)}\right)^{n-2+\delta/2}.
		\end{equation}
		We can rescale $\wt P$ so it becomes the unit cube. The original division of $\wt Q$ into cubes of side length $(2A+1)^{-1}$ induces a subdivision of $\wt P$ into cubes of side length $K^{-1}$. However, notice that $\nu$ satisfies all the hypotheses of Claim 1 by construction. Consider Claim 1 and take a descendant cube contained inside it. If $C_n$ is large enough in Claim 1, then combining this with \eqref{nu bound} leads to a contradiction  if we choose $K$ large enough, as permitted by Claim 2.
	\end{proof}

	\begin{remark}
		\label{rmk take arbitrary C}
		In Lemma \ref{lemma:hyperplane_riesz}, it is easy to check that, for any $C>1$, we can also ensure that $N(2Q) \geq CN$ by taking $A_0 = A_0(m,\delta,n,C)$ large enough.
		In a similar vein, in Lemma \ref{lemma:codimension_hyperplane}, we can change the assumption  $N(2Q)\leq 2N$ for $N(2Q)\leq CN$ by increasing $A_0$ accordingly.
	\end{remark}
	
	\subsection{Number of cubes with large doubling index}
	Now, we can prove a theorem analogous to \cite[Theorem 5.1]{LogPoly} showing that the number of bad cubes (inside the whole cube, not only the hyperplane) is also small. Here, bad cubes are cubes with frequency at least $N$, and we assume the frequency on the largest cube $Q$ is at most $(1+c)N$ where $c$ is a suitably small constant.
	\begin{theorem}[Bounding the count of bad subcubes]
		\label{thm:cubes_with_large_index}
		Fix $\eta>0$. There exists a constant $c>0$ such that for all $\delta>0$, $A\geq A_0(\delta,n)$, and $N>N_0(n)$ the following statement holds. If we partition a cube $Q$  satisfying $N(Q)\leq (1+c)N$ into $(2A+1)^{n}$ equal subcubes, then the number of subcubes with doubling index greater than $N$ is less than $\frac 1 2 (2A+1)^{n-2+\delta}$. 
	\end{theorem}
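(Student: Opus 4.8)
The plan is to adapt Logunov's argument for \cite[Theorem 5.1]{LogPoly}, feeding in the codimension-$(2-\delta)$ hyperplane lemma (Lemma \ref{lemma:codimension_hyperplane}) in place of the classical codimension-$1$ one. I would write $2A+1=3^L$ and induct on $L$; the statement is vacuous unless $3^L$ exceeds the threshold $2A_0(\delta,n)$ supplied by Lemma \ref{lemma:codimension_hyperplane} with a fixed small value of $\eta$ (say $\eta=\tfrac{1}{20}$), so the $A_0$ in the present statement should be taken at least this large. The one genuinely soft ingredient is the monotonicity of the maximal doubling index under inclusion, immediate from its definition as a supremum: $N(q')\le N(q)$ whenever $q'\subseteq q$. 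In particular every subcube of $Q$ inherits the hypothesis $N(\cdot)\le(1+c)N$, and every medium-scale cube containing a bad small cube is itself bad.

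For the inductive step I would fix an intermediate power of three $K=3^{L'}$ with $L'$ and $L-L'$ both above the threshold exponent, so that the subdivision of $Q$ into $K^n$ medium cubes, and of each medium cube --- rescaled to the unit cube, which does not affect doubling indices --- into $((2A+1)/K)^n$ small cubes, are each admissible instances of the theorem. Assuming toward a contradiction that at least $\tfrac12(2A+1)^{n-2+\delta}$ small cubes are bad, one dichotomizes: if at least $\tfrac12 K^{n-2+\delta}$ medium cubes are bad, the induction hypothesis applied to $Q$ at the medium scale $K$ is violated; otherwise, since every bad small cube lies in a bad medium cube, pigeonhole produces a (bad) medium cube containing more than $((2A+1)/K)^{n-2+\delta}$ bad small cubes, contradicting the induction hypothesis applied to that rescaled medium cube. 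Thus the step uses no new geometry, only monotonicity and counting.

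Lemma \ref{lemma:codimension_hyperplane} would enter only in the base case, namely the bounded range of $L$ (depending on $\delta$ and $n$) too small to admit the split above. There I would run Lemma \ref{lemma:codimension_hyperplane} in all $n$ coordinate directions and at every triadic scale between the threshold and $3^{-L}$: translating $u$ so that each lattice hyperplane layer plays the role of $\{x_n=0\}$, and using that for an interior subcube $q$ one has $N(2q)\le N(Q)\le(1+c)N<2N$, exactly the hypothesis the lemma needs. (The $O((2A+1)^{-1})$-thick collar of $Q$ on which $2q$ escapes $Q$ carries no bad cubes relevant to the eventual application, where the smallness set sits well inside $B_{1/2}$, and can be discarded by passing to a slightly smaller cube.) The resulting slice bounds would then be combined via a counting argument of the type in \cite[Section~5]{LogPoly}: a subset of the grid $\{1,\dots,2A+1\}^n$ all of whose hyperplane slices, at every triadic scale, are small must itself be small --- small enough to contradict the assumed lower bound $\tfrac12(2A+1)^{n-2+\delta}$.

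The hard part will be this base case. Slicing in one direction at one scale only bounds the total count by something of order $(2A+1)^{n-1+\delta}$, which is far too weak; the improvement to $(2A+1)^{n-2+\delta}$ must come from iterating the slicing over all $n$ directions and all intermediate scales, and the quantitative parameters --- the small $\eta$ in Lemma \ref{lemma:codimension_hyperplane}, the resulting threshold $A_0$, and the universal constant $c$ in $N(Q)\le(1+c)N$ --- have to be fixed in the right order for the inequalities to close with the stated factor $\tfrac12$. Balancing this interdependence, rather than the soft induction, is where the real work lies.
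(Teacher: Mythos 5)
Your outer induction (split $2A+1=3^{L}$ into medium and small scales, use monotonicity of the maximal doubling index plus pigeonhole) is sound and is essentially the same multi-scale iteration the paper uses ($L_{j+1}\le \tfrac12(2A+1)^{n-2+\delta}L_j$ over generations). But the substance of the theorem is exactly the part you defer to ``the base case,'' and the strategy you sketch for it does not work and is missing the key idea. Applying Lemma \ref{lemma:codimension_hyperplane} to lattice hyperplanes, in all directions and at all triadic scales, can only control, per application, the bad cubes meeting one fixed hyperplane through (roughly) the center of a cube with controlled doubling; summing over the $\sim 2A$ parallel lattice slices in a direction inevitably gives a bound of order $(2A+1)^{n-1+\delta}$, as you yourself compute, and no iteration over scales or directions is offered (nor known) that removes this extra factor of $A$. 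There is no reason a priori that the bad cubes line up along a single lattice hyperplane, so slicing alone cannot see the codimension-two bound.

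The missing ingredient is the geometric ``thinness of the bad set'' lemma (Lemma \ref{LogGeomLem}, i.e.\ \cite[Lemma 5.3]{LogPoly}): for a cube $q$ of sufficiently deep generation $j\ge j_0$ and for $c<c_0$, the set $F$ of points where the doubling index at small scales exceeds $N(Q)/(1+c)$ has width at most $w_0\,\diam(q)$, hence is contained in a thin slab around a \emph{single} hyperplane $P$ (not a lattice hyperplane; one rotates and passes to an adapted cube $\hat q$ centered on $P$, controlling $N(\hat q)$ by almost monotonicity even when $\hat q\not\subset Q$). One application of Lemma \ref{lemma:codimension_hyperplane} to $\hat q$ then bounds \emph{all} bad children of $q$ by $C_n\eta(2A+1)^{n-2+\delta}$, which is the statement your base case needs; the theorem follows by the generation-by-generation iteration. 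Note also that the constant $c$ in $N(Q)\le(1+c)N$ originates precisely from this geometric lemma ($c<c_0$), whereas in your proposal $c$ plays no role, which signals the gap. Finally, your remark that the boundary collar ``can be discarded'' because the smallness set lies in $B_{1/2}$ is not admissible here: the theorem is applied recursively to arbitrary subcubes (including by your own induction), so you cannot appeal to the eventual application to ignore cubes near $\partial Q$; the paper instead handles cubes near the boundary via almost monotonicity of the doubling index.
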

	The proof is analogous to \cite[Theorem 5.1]{LogPoly} but using the improved hyperplane lemma (see also \cite[Section 5]{LM}). For the reader's convenience, we present the details here as well. Fix a small $\eta$ and $A$ accordingly for which Lemma \ref{lemma:codimension_hyperplane} holds. We follow the same setup: given a cube $Q$, partition it into $(2A+1)^n$ many subcubes and iterate the process across $k$ generations. Fix $c>0$ to be determined later, consider a $j$th generation subcube $q$, that is, a cube with $\ell(q) = (2A+1)^{-j}$, and define
	\[
	F =\left\{x\in q: \sup_{r<\diam(q)/(2A+1)} N(x,r)>N(Q)/(1+c)\right\},
	\]
	so that any bad child cube of $q$ has nonempty intersection with $F$ as $N(Q)\leq (1+c)N$.
	By the width of {$F$}, we mean the minimum distance between the boundaries of two half spaces with parallel boundaries $H,H'$ such that $F\subset H\cap H'$.
	To control it, we will make use of the following geometric lemma, which was \cite[Lemma 5.3]{LogPoly}. Although his version was in terms of doubling index based on $|u|$ rather than $|\nabla u|$, the argument is unchanged due to almost monotonicity of the frequency holding in both settings. 
	\begin{lemma}[Thinness of the bad set]\label{LogGeomLem}
		With the setup as above, for any $w_0>0$, there exist $j_0,c_0>0$ such that if $j>j_0$ and $c<c_0$ then $\text{width}(F)<w_0\diam(q)$.
	\end{lemma}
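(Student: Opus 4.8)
The plan is to argue by contradiction and compactness, following Logunov's original strategy. Suppose the conclusion fails for some fixed $w_0 > 0$; then for every choice of $j_0, c_0$ there is a harmonic function $u$, a generation index $j > j_0$, a $j$th-generation subcube $q$, and a small enough $c$ for which $\mathrm{width}(F) \geq w_0 \diam(q)$. Rescaling $q$ to the unit cube (and renormalizing $\nabla u$), we extract a sequence $u_k$ of harmonic functions on a fixed large ball, with corresponding parameters $c_k \to 0$ and $j_k \to \infty$, such that the bad sets $F_k$ all have width at least $w_0$. Since the relevant doubling index of each $u_k$ over the unit cube is bounded above (by $N(Q)$, which after rescaling is controlled uniformly), we can normalize $\sup_{B_1}|\nabla u_k| = 1$ and pass to a subsequence converging locally uniformly to a harmonic function $u_\infty$.

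The key point is to understand what the condition defining $F$ becomes in the limit. A point $x \in q$ lies in $F$ when $\sup_{r < \diam(q)/(2A+1)} N(x,r) > N(Q)/(1+c)$. After rescaling so that $\diam(q) \sim 1$, the scale $\diam(q)/(2A+1)$ stays of fixed size, but the generation count $j \to \infty$ only enters through the fact that $q$ sits inside the original unit cube at depth $j$; what matters is that as $c \to 0$, the threshold $N(Q)/(1+c)$ tends up to $N(Q)$, which is the \emph{maximum} possible value of the doubling index inside $q$. Since $N(Q)$ is also (comparable to) the doubling index of the rescaled $u_\infty$ on the unit cube, in the limit $F$ forces the pointwise doubling index of $u_\infty$ at $x$, over small balls, to equal its maximal value over the whole cube. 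By the almost-monotonicity of the frequency (Proposition \ref{prop almost monotonicity}) this is a rigid condition: the set of such points must be contained in the zero set of $\nabla u_\infty$ together with the degenerate stratum where the frequency is locally constant and maximal. One then invokes the structure of harmonic functions with constant frequency — these behave like homogeneous harmonic polynomials — to conclude that the set of points realizing the maximal doubling index lies in a codimension-one subspace (in fact lower-dimensional), hence has width zero. This contradicts $\mathrm{width}(F_k) \geq w_0 > 0$ upon passing to the limit, since width is upper semicontinuous under the Hausdorff convergence of the sets $F_k$ (which follows from local uniform convergence of $\nabla u_k$ and the fact that the defining inequality is closed).

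The main obstacle, and the step requiring the most care, is the limiting analysis of the bad set: one must show that the sets $F_k$ do not escape to infinity or degenerate, that their limit is genuinely contained in the ``maximal-frequency'' stratum of $u_\infty$, and that this stratum is thin. The delicate bookkeeping is that the threshold $N(Q)/(1+c)$ and the maximal doubling index $N(Q)$ must be compared across the rescaling, using the comparability of maximal doubling and ordinary doubling (the Corollary following Proposition \ref{prop almost monotonicity}) to ensure that no constant losses accumulate as $j \to \infty$. Since the argument is identical to \cite[Lemma 5.3]{LogPoly} once one checks that almost-monotonicity (Proposition \ref{prop almost monotonicity}) holds verbatim for the doubling index of $|\nabla u|$ in place of $|u|$ — which it does, each partial derivative $\partial_i u$ being harmonic — I would simply verify this compatibility and then cite the geometric conclusion of Logunov's lemma, rather than reproducing the full compactness argument.
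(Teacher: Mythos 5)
Your bottom line---check that almost monotonicity (Proposition \ref{prop almost monotonicity}) survives the passage from $|u|$ to $|\nabla u|$ (it does, since each $\partial_i u$ is harmonic) and then quote the geometric conclusion of \cite[Lemma 5.3]{LogPoly}---is exactly what the paper does: it offers no independent proof of Lemma \ref{LogGeomLem}, only the observation that Logunov's argument goes through unchanged for the gradient-based doubling index. So on the final recommendation you and the paper coincide.

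However, the compactness/blow-up narrative you present as ``Logunov's original strategy'' is not how \cite[Lemma 5.3]{LogPoly} is proved, and it would not work as written, so you should not lean on it even as motivation. The constants $j_0,c_0$ must depend only on $w_0$ (and $n$, $A$), \emph{uniformly} in the doubling index, which here is a free and arbitrarily large parameter; the doubling index is scale invariant, so rescaling $q$ to unit size does not tame it, and a sequence $u_k$ with unbounded frequency need not produce a nontrivial normalized harmonic limit---the compactness scheme loses precisely the uniformity in $N$ that the lemma requires. Two further steps in your sketch are also shaky: the condition defining $F$ is a strict inequality (an essentially open condition), so the sets $F_k$ can collapse in the limit and ``width $\ge w_0$'' need not persist, contrary to your claim that the defining inequality is closed; and the assertion that the maximal-doubling stratum of the limit lies in a lower-dimensional subspace is exactly the kind of rigidity statement that would itself require proof, not something one can invoke. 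Logunov's actual proof is a direct quantitative argument built on almost monotonicity of the doubling index (exploiting that $j$ is large and $c$ is small to compare suprema over suitably chosen balls around well-separated points of $F$), which is precisely why it transfers verbatim once almost monotonicity for $|\nabla u|$ is available. If you cite the lemma, as the paper does, none of this matters; if you intend to reprove it, the compactness route is the wrong one.
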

	As in \cite{LogPoly}, we {continue} with a slightly easier lemma before proving Theorem \ref{thm:cubes_with_large_index}. This lemma differs from Theorem \ref{thm:cubes_with_large_index} because it only bounds the number of bad children for cubes obtained after sufficiently many subdivisions of the original cube.
	\begin{lemma}[Bound on the number of bad children cubes of descendant cubes]
		\label{lem bound number of bad children}
		Suppose {$\eta,c$} are sufficiently small and let $j_0=j_0(\eta,c)$ be large enough.
		Then, if $j\geq j_0$ in the definition of $q$, at most $\frac 1 2(2A+1)^{n-2+\delta}$ many of the children of $q$ satisfy $N(q) > N$.
	\end{lemma}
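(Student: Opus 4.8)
The plan is to run Logunov's argument from \cite[Section~5]{LogPoly}, substituting our improved hyperplane lemma, Lemma~\ref{lemma:codimension_hyperplane}, for the codimension-one input he uses (cf.\ also \cite[Section~5]{LM}). Since every bad child of $q$ meets the set $F$ and has diameter $\diam(q)/(2A+1)$, it suffices to bound the number of children of $q$ that lie in a slab of width $\lesssim \diam(q)/(2A+1)$ around some hyperplane. To produce such a slab I would apply Lemma~\ref{LogGeomLem} with $w_0$ taken small \emph{relative to the already-fixed $A$} (say $w_0 < \bigl((2A+1)\sqrt n\bigr)^{-1}$): then, once $j \ge j_0(\eta,c)$ and $c < c_0$, we have $\mathrm{width}(F) < w_0\diam(q)$, so $F$, and hence the union of all bad children, lies in a slab of width $\lesssim \diam(q)/(2A+1)$ about a hyperplane $H$.

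Next I would adapt an auxiliary cube to $H$: let $L = L(n)$ be a sufficiently large power of $3$ (e.g.\ the least power of $3$ exceeding $4\sqrt n$), and let $\hat Q$ be the cube with two faces parallel to $H$, center equal to the orthogonal projection onto $H$ of the center of $q$, and side length $L\,\ell(q)$. One checks that $q \subset \hat Q$, that $H$ is exactly the central hyperplane of $\hat Q$, and — for $j$ large, starting from $N(Q) \le (1+c)N$ and Proposition~\ref{prop almost monotonicity} — that $N(2\hat Q) \le C_0 N$ for a dimensional constant $C_0$. Since $|\nabla u|$ and the doubling index are rotation- and dilation-invariant, Lemma~\ref{lemma:codimension_hyperplane} (in the form of Remark~\ref{rmk take arbitrary C}, to absorb the constant $C_0$) applies to $\hat Q$ subdivided into $(2A+1)^n$ subcubes of side length $L\,\ell(q')$, where $\ell(q') := \ell(q)/(2A+1)$ is the side length of the children of $q$; the subcubes of $\hat Q$ meeting $H$ form its central layer. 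The crucial point is then: if $q'$ is a bad child, $N(q') > N$ is realized by an admissible pair $(y_0, r_0)$ with $y_0 \in q'$, $r_0 \le \ell(q') < L\,\ell(q')$; since $q'$ sits in the slab and $L$ is large, $y_0$ lies in the central layer of $\hat Q$, inside some subcube $\tilde q$, and $(y_0,r_0)$ is then admissible in the supremum defining $N(\tilde q)$, so $N(\tilde q) \ge \log\frac{\sup_{B(y_0, 10 n r_0)}|\nabla u|}{\sup_{B(y_0, r_0)}|\nabla u|} > N$. Thus every bad child of $q$ points to a central-layer subcube of $\hat Q$ with doubling index $> N$, and a fixed such subcube is produced by at most $C(n) := (L\sqrt n + 1)^n$ bad children.

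To conclude, I would apply Lemma~\ref{lemma:codimension_hyperplane} with a parameter $\eta'$ — legitimate once $A$ is also large enough for that lemma at level $\eta'$ — to see that fewer than $\eta'(2A+1)^{n-2+\delta}$ central-layer subcubes of $\hat Q$ have doubling index exceeding $N$; hence the number of bad children of $q$ is at most $C(n)\,\eta'(2A+1)^{n-2+\delta}$, which is $< \tfrac12(2A+1)^{n-2+\delta}$ as soon as $\eta' < \bigl(2C(n)\bigr)^{-1}$, a choice depending only on $n$. The step I expect to be most delicate is the geometric matching in the second paragraph — relating the axis-parallel children of $q$ to the $H$-aligned subcubes of $\hat Q$, checking that every bad child really lands in the central layer, and verifying $N(2\hat Q) \le C_0 N$ — together with keeping the order of the parameter choices non-circular: first $\eta'$ in terms of $n$, then $A$ large in terms of $\eta,\eta',\delta,n$ (so that Lemma~\ref{lemma:codimension_hyperplane} applies both at level $\eta$ and at level $\eta'$), then $w_0$ small in terms of $A$, and only then $j_0,c_0$ supplied by Lemma~\ref{LogGeomLem}.
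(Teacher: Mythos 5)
Your proposal is correct and follows essentially the same route as the paper: trap the bad set $F$ in a thin slab via Lemma~\ref{LogGeomLem} with $w_0$ small relative to $A$, build a hyperplane-aligned auxiliary cube containing $q$ whose doubling index is controlled through Proposition~\ref{prop almost monotonicity}, apply Lemma~\ref{lemma:codimension_hyperplane} with Remark~\ref{rmk take arbitrary C} to its central layer, and transfer the count back to the children of $q$ at the cost of a dimensional multiplicity constant absorbed into the choice of $\eta$. Your version merely makes the bookkeeping more explicit (fixing the auxiliary cube's side as $L(n)\,\ell(q)$ and tracking the point realizing $N(q')>N$ into the central layer), which matches the paper's argument in substance.
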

	\begin{proof}[Proof of Lemma \ref{lem bound number of bad children}]
		Take $w_0=(18A+9)^{-1}$ in Lemma \ref{LogGeomLem}, which gives some $j_0,c_0>0$. We have that $F\subset B_{w_0\diam(q)}(P)$ for some hyperplane $P$ by Lemma \ref{LogGeomLem} and after applying a rotation, we can assume $P$ is parallel to some coordinate hyperplane. 
        
        Let $\hat q$ be the smallest cube with center in $P$ and with its sides aligned with $P$ such that $q\subset \hat q$ (note that this cube may not be unique). From the fact that $w_0$ is small, we have that $P$ intersects $2q$ and hence $\ell(\hat q) \lesssim \ell(q)$. From now on, we will also consider the triadic structure generated by the cube $\hat q$.	If we divide $\hat q$ into $(2A+1)^{n}$ congruent subcubes, then the number of bad subcubes of $\hat q$ that intersect $P$ controls the total number of bad subcubes of $q$ (up to some dimensional multiplicative factor) as $F \subset B_{w_0 \diam(q)}(P)$ which in turn is contained in the union of all children of $\hat q$ that intersect $P$. 
		
		Let's control $N(\hat q)$. If $\hat q \subset Q$, then we have $N(\hat q)\leq N(Q)$ by definition of (maximal) doubling index. If $\hat q \not\subset Q $ (which can be the case if $q$ is very close to $\partial Q$), Proposition \ref{prop almost monotonicity} (almost monotonicity of $N$) together with the fact that $\hat q \cap Q \neq \emptyset$, $\ell(\hat q) \ll \ell(Q)$, and $N(\hat q)>N_0 \gg 1$ implies that $N(\hat q) \leq C N(Q)$ for some universal constant $C$. In either case, we are now in the setting of Lemma \ref{lemma:codimension_hyperplane} (together with Remark \ref{rmk take arbitrary C} and with $P$ playing the role of $\{x_n=0\}$) and it implies that $\hat q$ has at most $\eta(2A+1)^{n-2+\delta}$ bad children subcubes intersecting $P$. Hence, $q$ has at most $C_n \eta(2A+1)^{n-2+\delta}$ bad children in total for some dimensional constant $C_n>0$, so by choosing $\eta = (2C_n)^{-1}$ we obtain the desired result.
		\end{proof}
	The proof of Theorem \ref{thm:cubes_with_large_index} now follows by subdividing $Q$ into $(2A+1)^n$ congruent subcubes and iterating the process enough times. Indeed, let $L_j$ denote the count of bad cubes {of generation $j$} of this subdivision procedure. Any good cube has the property that all its descendants are good by monotonicity, so the upper bound on the number of bad children of a fixed cube implies that
	\[
	L_{j+1}\le \frac{1}{2} (2A+1)^{n-2+\delta} L_j
	\]
	Thus,
	\[
	L_{j_0+k}\le \left (\frac{1}{2} (2A+1)^{n-2+\delta}\right)^{k}L_{j_0} \leq \frac {(2A+1)^{(2-\delta)j_0}} {2^{k}} (2A+1)^{
	(n-2+\delta)(k+j_0)},
	\]
	 which is smaller than $\frac 1 2 (2A+1)^{
	(n-2+\delta)(k+j_0)}$ if $k$ is large enough to beat the contribution from $L_{j_0}$.

    \subsection{Estimates for the sublevel set in the case of bounded frequency}
	With Theorem \ref{thm:cubes_with_large_index} established, Theorem \ref{thm:prop_of_smallness} follows via an analogous argument to the one in \cite[Section 5]{LM}. We recall that the base case {$N(Q)\leq N_0$ is a consequence of a} volume estimate on the ``effective critical set" of the solution. We include this estimate, \cite[Theorem 1.1]{NV}, for convenience.
	First, we introduce the \textit{$r$-effective critical set }of a harmonic function $u$ by
	\[
	\mathcal{C}_r(u)=\left\{x:\inf_{B_r(x)}|\nabla u|^2 <\frac n {16} r^{-2}\fint_{\partial B_{2r}(x)} |u(y)-u(x)|^2\,dy\right\}.
	\] 
	\begin{theorem}\label{thm:NV}
		Let $u:B_2\rightarrow \mathbb R^n$ be a solution to the PDE
		\[
		\operatorname{div}A\nabla u+\mathbf{b}\cdot \nabla u=0.
		\]
		If the doubling index of $\nabla u$ is bounded above on $B_1(0)$ by {$N$}, then for all $0<r<1$
		\[
		\text{Vol}\,(B_{1/2}(0)\cap B_r(\mathcal{C}_r(u))) \le C^{N^2}r^2
		\]
		for all $N\ge 1$.
	\end{theorem}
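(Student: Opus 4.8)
The statement is \cite[Theorem 1.1]{NV}, so what follows is a sketch of the Naber--Valtorta argument; since the constant is permitted to be $C^{N^2}$ we only need it in a crude form, avoiding the sharper Reifenberg-type refinements. For $x\in B_1$ and $\rho>0$ write $\widetilde N_x(\rho)$ for Almgren's frequency of $u-u(x)$ centered at $x$ at scale $\rho$. For $\operatorname{div}A\nabla u+\mathbf b\cdot\nabla u=0$ with Lipschitz $A$ and the usual hypotheses on $\mathbf b$, the Garofalo--Lin almost-monotonicity makes $\rho\mapsto e^{C\rho}\bigl(\widetilde N_x(\rho)+C\bigr)$ nondecreasing, and the interior gradient estimate makes $\widetilde N_x$ comparable, up to a universal additive constant, to the doubling index of $\nabla u$ near $x$; hence the hypothesis gives $\widetilde N_x(\rho)\le C(N+1)$ for all $x\in B_{1/2}$ and $\rho\le\tfrac14$, with total increment $\le C(N+1)$ over $(0,\tfrac14)$.

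The first step is to trap $\mathcal C_r(u)$ inside a quantitative stratum. The defining inequality of $\mathcal C_r(u)$ fails, with a definite margin, for any small perturbation of a nonzero affine function (this is the role of the constant $n/16$ and the radius ratio $2$), so $x\in\mathcal C_r(u)$ forces $\widetilde N_x(\rho)\ge 1+c_0(n)$ for some $\rho$ with $r\le\rho\le Cr$; almost-monotonicity then propagates this to $\widetilde N_x(s)\ge 1+c_0$ for all $s\in[Cr,\tfrac14]$. Since $\widetilde N_x$ is also $\le C(N+1)$, a compactness argument yields $\epsilon_0=\epsilon_0(n,N)>0$ such that at each such scale $u$ is $\epsilon_0$-far, in scale-invariant $W^{1,2}$, from every homogeneous solution of the constant-coefficient blow-up operator that is invariant under $n-1$ translations; but any such solution depends on one variable only, hence is affine. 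Therefore $\mathcal C_r(u)\subset\mathcal S^{n-2}_{\epsilon_0,Cr}(u)$, the set of $x\in B_{1/2}$ that are not $(n-1,\epsilon_0,s)$-symmetric for any $s\in[Cr,\tfrac14]$, and it suffices to bound $\mathrm{Vol}\bigl(B_{1/2}\cap B_r(\mathcal S^{n-2}_{\epsilon_0,Cr}(u))\bigr)$.

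That bound is the heart of the matter and follows the Naber--Valtorta covering scheme, whose two inputs are quantitative rigidity (if $\widetilde N_x(\sigma)-\widetilde N_x(\rho)<\epsilon$ then, after rescaling, $u-u(x)$ is $\tau(\epsilon)$-close in $W^{1,2}$ to a homogeneous solution, with $\tau(\epsilon)\to0$ as $\epsilon\to0$) and cone-splitting (if $u$ is $\tau$-almost-homogeneous about $n$ points that are quantitatively spread in an $(n-1)$-plane, then on the relevant ball $u$ is $\tau'$-close to a one-variable, hence affine, function, which is incompatible with membership in the stratum). One decomposes $B_{1/2}$ into a dyadic tree of balls refined down to scale $\simeq r$: along any descending chain the frequency increases by at most $C(N+1)$, so at most $O(N/\epsilon_0)$ of its scales can be ``pinched failures'', while at every other scale rigidity and cone-splitting confine the portion of the stratum inside a parent ball to a thin neighborhood of an $(n-2)$-plane, hence to $\lesssim(r_j/r_{j+1})^{n-2}$ children (versus the trivial $\lesssim(r_j/r_{j+1})^{n}$ at a bad scale). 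Multiplying along the tree, and choosing the pinching threshold $\epsilon_0$ suitably small in terms of $n$ and $N$ so that the almost-homogeneity is good enough to iterate, one covers $\mathcal S^{n-2}_{\epsilon_0,Cr}(u)$ by at most $C(n)^{N^2}r^{-(n-2)}$ balls of radius $r$; summing their volumes gives $\mathrm{Vol}\bigl(B_{1/2}\cap B_r(\mathcal C_r(u))\bigr)\le C^{N^2}r^2$.

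The main obstacle is the rigidity and cone-splitting input: converting almost-monotonicity of the frequency into effective $W^{1,2}$-closeness to a homogeneous solution, and then into an approximate translation symmetry, and doing this for a Lipschitz matrix $A$ with a drift term rather than for $\Delta$, which requires the standard but technical perturbations of Almgren's differential identities and of the accompanying blow-up/compactness arguments. The secondary delicate point is the bookkeeping that yields exactly the $C^{N^2}$ dependence; here we benefit from only needing the crude estimate, so the ``sum over a bounded number of bad scales'' covering above suffices and we need not reproduce the discrete-Reifenberg / $L^2$-best-plane machinery of \cite{NV}.
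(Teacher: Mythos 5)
The paper does not prove this statement at all: Theorem \ref{thm:NV} is imported verbatim from \cite[Theorem 1.1]{NV} and used as a black box, so the ``paper's proof'' is simply the citation, and to that extent your identification of the source and of the overall Naber--Valtorta strategy (frequency pinching, quantitative rigidity, cone-splitting, trapping $\mathcal C_r(u)$ in a quantitative $(n-2)$-stratum) is consistent with what the paper relies on.

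However, read as a proof, your sketch has a genuine gap exactly where you claim to be able to shortcut \cite{NV}. The ``crude'' covering you describe --- at most $O(N/\epsilon_0)$ pinched scales per chain, and at every good scale confinement of the stratum to a tube around an $(n-2)$-plane --- is the Cheeger--Naber quantitative stratification argument, and it does \emph{not} give $C^{N^2}r^{-(n-2)}$ balls of radius $r$. At each good refinement step the tube is covered by $C(n)D^{n-2}$ balls (refinement ratio $D$), and the harmless-looking factor $C(n)>1$ compounds over the $\log_D(1/r)$ generations into a factor $r^{-\log_D C(n)}$; one can make this loss an arbitrarily small power $r^{-\eta}$ by taking $D$ large, but never zero (the same absorption is needed for the combinatorial factor counting where along a chain the bad scales sit). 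So this route yields $\mathrm{Vol}\le C\,r^{2-\eta}$, not $C^{N^2}r^{2}$; removing the $\eta$ is precisely what the discrete-Reifenberg/$L^2$-best-plane machinery of \cite{NV} was invented for, so it cannot be waved away on the grounds that ``the constant is permitted to be $C^{N^2}$'' --- the machinery is needed for the exponent in $r$, not for the constant. A second, related defect: you obtain the stratification inclusion with an $\epsilon_0=\epsilon_0(n,N)$ produced by compactness, with no effective dependence on $N$; then the bad-scale count $O(N/\epsilon_0)$, and hence your final constant, is some unquantified $C(n,N)$ rather than the stated $C^{N^2}$, whereas in \cite{NV} the rigidity and cone-splitting parameters are made effective (depending only on $n$ and the ellipticity/Lipschitz data) exactly so that the constant comes out as $C^{N^2}$. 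For the way the paper actually uses the theorem (Lemma \ref{lem base case}, where there is $\delta$-room in the Hausdorff-content exponent and the base case has $N\le N_0(n)$), an $\eta$-lossy version would in fact suffice, but as a proof of Theorem \ref{thm:NV} as stated your argument falls short; the correct move --- and the paper's --- is to cite \cite{NV}.
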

	In particular, their proof shows that the $r$-effective critical set can be covered by at most $C^{N^2}$ many balls of radius $r$. As a result, if the unit cube is subdivided into $K^n$ many congruent subcubes, taking $r=K^{-1}$, implies that {there exists $c>0$ such that} the number of cubes $q$ where
	\[
	\inf_q |\nabla u|< c\sup_{2q} |\nabla u|
	\]
	is at most $C^{N^2}K^{n-2}$.
	With this, we will establish the base case, which follows analogously to the argument in \cite{LM}.
	\begin{lemma}\label{lem base case}
		Suppose that $u:Q\rightarrow\R$ where $Q\subset \R^n$ is the unit cube is a harmonic function satisfying $\Vert \nabla u\Vert_{L^{\infty}(Q)}=1$ and $N(Q)\le  N_0$. Then for every $\delta>0$, there are constants $C(\delta),\beta(\delta)>0$ such that
		\[
		\mathcal{H}_{\infty}^{n-2+\delta}(|\nabla u|^{-1}([0,e^{-a}]))<C(\delta)^{N_0^2}e^{-\beta(\delta)a/N_0}
		\]
		holds for all $a>0$.
	\end{lemma}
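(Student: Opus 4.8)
The plan is to cover the sublevel set $|\nabla u|^{-1}([0,e^{-a}])$ by a single family of triadic subcubes of $Q$ of one carefully chosen side length $3^{-j}=3^{-j(a,N_0)}$, and to bound the number of these cubes by combining two facts: (i) by Theorem~\ref{thm:NV} (in the covering form stated after it) the subcubes $q$ on which $\inf_q|\nabla u|<c\sup_{2q}|\nabla u|$ --- the ``critical-type'' cubes, with $c$ the constant from the remark following that theorem --- number at most $C^{N_0^2}(3^j)^{n-2}$, a genuinely codimension-$2$ bound; and (ii) the hypothesis $N(Q)\le N_0$ forces $|\nabla u|$ to be not too small on \emph{any} fixed-scale subcube, which will let me rule out all non-critical-type cubes at the chosen scale.

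First I would record the elementary lower bound coming from bounded doubling: for a subcube $q\subset Q$ of side $3^{-j}$ with center $x_q$, iterating the maximal-doubling inequality (equivalently Proposition~\ref{prop almost monotonicity} and Corollary~\ref{cor big scale doubling}) from radius $3^{-j}/2$ up to radius of order $1$, together with $\|\nabla u\|_{L^\infty(Q)}=1$, yields a constant $C_0=C_0(n)$ with
\[
\sup_q|\nabla u|\ \ge\ \sup_{B(x_q,\,3^{-j}/2)}|\nabla u|\ \ge\ e^{-C_0N_0 j}\qquad(j\ge1).
\]
If $q$ meets $\{|\nabla u|\le e^{-a}\}$ but is \emph{not} critical-type, then $c\sup_{2q}|\nabla u|\le\inf_q|\nabla u|\le e^{-a}$, hence $\sup_q|\nabla u|\le c^{-1}e^{-a}$; comparing with the displayed lower bound, such a cube can exist only when $e^{-C_0N_0 j}\le c^{-1}e^{-a}$, i.e. only when $j\ge\frac{a-\log(1/c)}{C_0N_0}$. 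I would therefore take $j$ to be the largest integer strictly below this threshold. Provided $a\ge a_0(n,N_0)$ this gives $j\ge \frac{a}{2C_0N_0}\ge1$, and by the above \emph{every} subcube of side $3^{-j}$ meeting the sublevel set is critical-type; the complementary range $a<a_0$ is trivial, since there $\mathcal H_\infty^{n-2+\delta}(Q)\le C_n$ while the claimed right-hand side is bounded below by a constant depending on $n,N_0,\delta$, so it suffices to enlarge $C(\delta)$.

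Applying Theorem~\ref{thm:NV} with $N=N_0$ and $r=3^{-j}$ then bounds the number of critical-type subcubes of side $3^{-j}$ by $C^{N_0^2}(3^j)^{n-2}$, so, since each has diameter $\sqrt n\,3^{-j}$,
\[
\mathcal H_\infty^{n-2+\delta}\bigl(|\nabla u|^{-1}([0,e^{-a}])\bigr)\ \le\ C^{N_0^2}(3^j)^{n-2}\bigl(\sqrt n\,3^{-j}\bigr)^{n-2+\delta}\ =\ C(\delta)^{N_0^2}\,3^{-\delta j}.
\]
With $j\ge \frac{a}{2C_0N_0}$ this is at most $C(\delta)^{N_0^2}e^{-\beta(\delta)a/N_0}$ with $\beta(\delta)=\frac{\delta\log 3}{2C_0}$, which is the asserted estimate after absorbing the dimensional factor $n^{(n-2+\delta)/2}$ into $C(\delta)^{N_0^2}$.

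The step I expect to demand the most care is the bookkeeping needed to invoke Theorem~\ref{thm:NV}: it is stated for a solution on $B_2$ whose gradient has doubling index bounded on $B_1$, whereas here $u$ lives on a neighborhood of the unit cube with $N(Q)\le N_0$, so one must verify that this maximal-doubling hypothesis furnishes, uniformly over the scales $3^{-j}$, a doubling-index bound of size $O(N_0)$ feeding into the $C^{N_0^2}$ count (so that the exponent stays $\asymp N_0^2$). The only other subtlety is the coupled choice of the subdivision scale: $j$ must be small enough ($\asymp a/N_0$) that non-critical cubes cannot reach the sublevel set, yet large enough that $3^{-\delta j}$ still decays like $e^{-\beta a/N_0}$ --- the computation above shows the natural choice does both at once.
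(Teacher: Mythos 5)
Your proposal is essentially the paper's own proof: subdivide $Q$ at a single scale $K^{-1}=3^{-j}$ with $\log K \asymp a/N_0$, use the bounded doubling index to force every subcube meeting $\{|\nabla u|\le e^{-a}\}$ to be of critical type at that scale, count those cubes by $C^{N_0^2}K^{n-2}$ via the covering consequence of Theorem \ref{thm:NV}, and sum $(\operatorname{diam})^{n-2+\delta}$ to obtain $C(\delta)^{N_0^2}K^{-\delta}\le C(\delta)^{N_0^2}e^{-\beta a/N_0}$. If anything, your write-up is slightly more careful than the paper's one-paragraph argument (explicit handling of small $a$ and of the direction of the constraint on the scale, where the paper's ``sufficiently large $\gamma$'' is really a sufficiently small multiple of $a/N_0$).
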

	\begin{proof}
		Divide $Q$ into $K^n$ (where $K$ will be determined later) many congruent cubes, and notice that on any child cube $q$  far from the effective critical set, we have the lower bound
		\[
		\inf_{q} |\nabla u| \ge c \sup_{2q} |\nabla u| \ge cK^{-CN_0}
		\]
		due to monotonicity of frequency. Thus, it follows that if we take $K=e^{\gamma a/N_0}$ for sufficiently large $\gamma$, then we have that $|\nabla u|^{-1}([0,e^{-a}])$ is contained in the union of at most $C^{N_0^2}K^{n-2}$ many bad cubes. This immediately gives the desired bound for the $(n-2+\delta)$-Hausdorff content of the sublevel set.
	\end{proof}
	Thus, the base case in \cite{LM} still works if the smallness set has positive $(n-2+\delta)$-Hausdorff content for any $\delta>0$, but their induction step (following from \cite[Lemma B]{LM}) only works if $\delta>1-c_n$. However, we can replace their usage of \cite[Lemma B]{LM} by our own Theorem \ref{thm:cubes_with_large_index}; this allows the induction step to work for any $\delta>0$, completing the proof. 
    \subsection{Recursive inequality}
    For the reader's convenience, we present the details for how to derive the relevant recursive inequality.
    \begin{proposition}
        Given a harmonic function $u$ and parameters $a>0$ and $\delta>0$, we let
        \[
        m(u,a)=\mathcal{H}_{\infty}^{n-2+\delta}(\{x\in Q_0:|\nabla u(x)|<e^{-a}\sup_{Q_0} |\nabla u|\})
        \]
        Given $N>0$, we define $M(N,a)$ to be the maximum of $m(u,a)$ across all harmonic functions with frequency on $Q_0$ bounded above by $N$. Then
        \[
        M(N,a)\le A^{2-\delta}M(N/(1+c),a-C_1N\log A)+A^{-\delta}M(N,a-C_1\log N)
        \]
    \end{proposition}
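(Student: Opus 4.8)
The plan is to set up a dyadic (really $(2A+1)$-adic) decomposition of $Q_0$ and split the sublevel set according to whether a subcube is "good" (small doubling index) or "bad" (doubling index larger than $N$), then apply Theorem \ref{thm:cubes_with_large_index} to control the number of bad subcubes and apply the definition of Hausdorff content to each piece. Concretely, fix a harmonic function $u$ with $N(Q_0)\le N$ and normalize $\sup_{Q_0}|\nabla u|=1$. Partition $Q_0$ into $(2A+1)^n$ congruent subcubes $q_i$ of side length $(2A+1)^{-1}$. We classify these subcubes as bad if $N(q_i)>N$ and good otherwise. There are two regimes to handle depending on whether $N(Q_0)$ is above or below the threshold $(1+c)N$; the recursive inequality as stated is designed for the regime $N(Q_0)\le(1+c)N$, so assume that (the low-frequency base case being handled separately by Lemma \ref{lem base case}).

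First I would bound the contribution of good cubes. On a good subcube $q$, almost monotonicity of the doubling index (Proposition \ref{prop almost monotonicity} and its corollaries) gives $\sup_{q}|\nabla u|\gtrsim A^{-CN}\sup_{Q_0}|\nabla u| = A^{-CN}$ (taking $C_1 = C$ appropriately), since the doubling index at every scale down to $\ell(q)$ is at most comparable to $N$. Hence if $x\in q\cap Q_0$ satisfies $|\nabla u(x)| < e^{-a}$, then after rescaling $q$ to a unit cube, the function $v(y) = u$ restricted to $q$ (suitably normalized so that $\sup|\nabla v|=1$ on the rescaled cube) has frequency at most $N$ — actually at most $N(Q_0)/(1+c)$ on good cubes, wait, more precisely $N(q)\le N$ — but to get the $N/(1+c)$ in the recursion one uses that the relevant subcubes feeding into the recursion are those where we still want room to iterate, which forces taking the frequency bound on the child to be $N/(1+c)$ (this is the standard Logunov bookkeeping; one actually needs to run the subdivision through enough generations so that a good cube truly has frequency at most $N/(1+c)$, absorbing the loss into $c$). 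On such a rescaled good cube, the point $x$ has $|\nabla v(\tilde x)| < e^{-a} / \sup_q |\nabla u| \le e^{-a} A^{CN} = e^{-(a - C_1 N\log A)}$. Therefore the portion of the sublevel set lying in good cubes is covered by at most $(2A+1)^n$ sets, each of which — after rescaling by $A^{-1}$, which multiplies $(n-2+\delta)$-content by $A^{-(n-2+\delta)}$ — has content at most $M(N/(1+c), a - C_1 N\log A)$. Summing and using scaling invariance of content under the rescaling, the good-cube contribution is at most $(2A+1)^n \cdot A^{-(n-2+\delta)} M(N/(1+c), a - C_1 N\log A) \lesssim A^{2-\delta} M(N/(1+c), a-C_1 N\log A)$.

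Next I would bound the contribution of bad cubes using Theorem \ref{thm:cubes_with_large_index}, which (since $N(Q_0)\le(1+c)N$) says the number of bad subcubes is at most $\tfrac12(2A+1)^{n-2+\delta}$. On a bad subcube $q$ we have no gain in the sup of $|\nabla u|$, so we only use the trivial bound: on the rescaled bad cube, the function still has frequency at most $N$ (actually at most $2N$ or so, but one can arrange via Remark \ref{rmk take arbitrary C} / almost monotonicity that it is $\le N$ in the bookkeeping, or absorb constants), and the scale-$A^{-1}$ content of the sublevel set inside $q$ is at most $A^{-(n-2+\delta)} M(N, a - C_1\log N)$ — here the shift in $a$ comes from the fact that $\sup_q|\nabla u| \ge $ something like $N^{-C}$, using monotonicity only down to scale $\ell(q) = (2A+1)^{-1}$ rather than all the way, so one loses only a $\log N$ factor. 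Summing over the at most $\tfrac12 (2A+1)^{n-2+\delta}$ bad cubes gives a contribution of at most $\tfrac12(2A+1)^{n-2+\delta}\cdot A^{-(n-2+\delta)} M(N,a-C_1\log N) \lesssim A^{-\delta} M(N, a-C_1\log N)$. Adding the two contributions and taking the supremum over all admissible $u$ yields the claimed inequality $M(N,a)\le A^{2-\delta} M(N/(1+c), a-C_1 N\log A) + A^{-\delta} M(N, a-C_1\log N)$.

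The main obstacle, and the only genuinely delicate point, is the bookkeeping that lets good cubes be assigned frequency $N/(1+c)$ rather than just $N$: this is what makes the recursion actually decrease the frequency parameter and eventually terminate at the base case of Lemma \ref{lem base case}. This requires iterating the subdivision through $j_0 = j_0(c)$ generations (as in Lemma \ref{lem bound number of bad children} and the proof of Theorem \ref{thm:cubes_with_large_index}) before the dichotomy between "frequency $>N$" and "frequency $\le N/(1+c)$" becomes clean, and then absorbing the finitely many intermediate generations' worth of extra factors $(2A+1)^{n j_0}$ into constants — exactly the argument already carried out in Theorem \ref{thm:cubes_with_large_index}. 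The shifts in the $a$-parameter ($C_1 N\log A$ for good cubes, $C_1\log N$ for bad cubes) must be tracked carefully via the comparability corollaries of Proposition \ref{prop almost monotonicity}, but these are routine once the frequency bookkeeping is fixed. I expect the write-up to mirror \cite[Section 5]{LM} closely, substituting Theorem \ref{thm:cubes_with_large_index} for their Lemma B.
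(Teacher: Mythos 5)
Your overall strategy -- subdivide $Q_0$ into $(2A+1)^n$ subcubes, split them into good and bad according to the doubling index, count the bad ones with Theorem \ref{thm:cubes_with_large_index}, and rescale each cube's contribution into $M(\cdot,\cdot)$ with the $a$-shift coming from $\sup_{Q_0}|\nabla u|\le A^{C_1N}\sup_{q}|\nabla u|$ -- is exactly the paper's. But two steps, including the one you yourself flag as the delicate point, do not go through as written. You declare a cube bad when $N(q)>N$, so your good cubes only satisfy $N(q)\le N$ and your good-cube term would be $M(N,\cdot)$, not $M(N/(1+c),\cdot)$; the recursion would then never decrease the frequency parameter. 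Your proposed repair (iterating the subdivision through more generations so that good cubes ``truly'' have frequency at most $N/(1+c)$) does not work: subdividing further never lowers the doubling index of a cube already known to satisfy $N(q)\le N$. The paper's fix is much simpler: call $q$ bad when $N(q)>N/(1+c)$ and apply Theorem \ref{thm:cubes_with_large_index} at the threshold $N/(1+c)$, which is legitimate precisely because $N(Q_0)\le N=(1+c)\cdot\frac{N}{1+c}$. Then good cubes genuinely carry frequency at most $N/(1+c)$, while bad cubes carry frequency at most $N(Q_0)\le N$ and are few in number.

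The second problem is the coefficient on the bad term. With at most $\tfrac12(2A+1)^{n-2+\delta}$ bad cubes, each contributing $(2A+1)^{-(n-2+\delta)}M(\cdot,\cdot)$ after rescaling, your sum is bounded by a constant of order $\tfrac12$, not by $A^{-\delta}M(\cdot,\cdot)$ as you assert; no negative power of $A$ is gained if Theorem \ref{thm:cubes_with_large_index} is invoked with the same exponent $\delta$ as the Hausdorff content. The paper gains the needed factor by applying the counting theorem with exponent $\delta/2$, so the bad count is $\tfrac12(2A+1)^{n-2+\delta/2}$ and the bad coefficient becomes a negative power $A^{-\delta/2}$, which is what the subsequent double induction requires. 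Relatedly, your justification of the shift $a-C_1\log N$ on the bad term via ``$\sup_q|\nabla u|\gtrsim N^{-C}$'' is not valid: monotonicity down to scale $(2A+1)^{-1}$ costs a factor $A^{-C_1N}$ regardless of whether the cube is good or bad, so the shift is $C_1N\log A$ in both terms; this is what the paper's proof actually yields and what is used later in the induction (the $\log N$ appearing in the displayed statement is evidently a typo, and it cannot be obtained by this argument).
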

    \begin{proof}
        Fix an arbitrary such solution $u$. Let $A$ be large enough, as in Theorem \ref{thm:cubes_with_large_index} with parameter $\delta/2$, and apply the theorem to partition $Q_0$ into $A^n$ many congruent subcubes $\mathcal{Q}=\{q_j\}_{j=1}^{A^n}$. Subadditivity of Hausdorff content implies that
        \begin{align*}
        m(u,a)&\le \sum_{j=1}^{A^n}\mathcal{H}_{\infty}^{n-2+\delta}(\{x\in q_j:|\nabla u(x)|\le e^{-a}\sup_{Q_0}|\nabla u|\})\\
        &\le  \sum_{j=1}^{A^n}\mathcal{H}_{\infty}^{n-2+\delta}(\{x\in q_j:|\nabla u(x)|\le e^{-a}B^{C_1N}\sup_{q_j}|\nabla u|\}),
        \end{align*}
        where we used the upper bound on the doubling. Now, we split $\mathcal{Q}=\mathcal{G}\cup \mathcal{B}$ into good and bad cubes, where a cube $q$ is bad if $N(q)>N/(1+c)$. Theorem \ref{thm:cubes_with_large_index} lets us easily bound the contribution from bad cubes as
        \[
        \sum_{q\in\mathcal{B}}\mathcal{H}_{\infty}^{n-2+\delta}(\{x\in q:|\nabla u(x)|\le e^{-a}A^{C_1N}\sup_{q}|\nabla u|\})\le A^{n-2+\delta/2}A^{-(n-2+\delta)}M(N,a-C_1N\log A).
        \]
        Here, we made use of the invariance of frequency under rescaling $q_j$ back to being unit size. Similarly, we can bound the good cubes as
        \[
        \sum_{q\in\mathcal{G}}\mathcal{H}_{\infty}^{n-2+\delta}(\{x\in q:|\nabla u(x)|\le e^{-a}A^{C_1N}\sup_{q}|\nabla u|\})\le A^{n-2+\delta/2}A^{-n}M(N/(1+c),a-C_1N\log A).
        \]
        Combining these gives the desired recursive inequality.
    \end{proof}
    We will now repeat the argument of Logunov and Malinnikova to show that this gives the desired propagation of smallness, i.e. Theorem \ref{thm:prop_of_smallness}. It will suffice to show that
    \begin{equation}\label{eq exponential bound}
    M(N,a)\le Ce^{-\beta a/N}.
    \end{equation}
    If we know \eqref{eq exponential bound}, then using the lower bound on Hausdorff content, we see that $N>ca$ holds necessarily, and then the claim follows by \eqref{eq final implication}. We will now prove \eqref{eq exponential bound} by double induction in $a$ and $N$; the base case was already discussed. Suppose we know \eqref{eq exponential bound} for some $N/(1+c)$ and all $a>0$ as well as for $N$ and $a\le a_0$. We will show the inequality holds for $N$ and $a_0+C_1N\log A$. We may assume $a_0>C'N\log A$ as otherwise the result follows from compact sets having finite Hausdorff content. The recursive inequality gives
    \[
    M(N,a_0+C_1N\log A)\le A^{2-\delta} Ce^{-\beta a_0(1+c)/N}+A^{-\delta}Ce^{-\beta a_0/N} \stackrel{?}{\le} Ce^{-{\beta (a_0+C_1N\log A)/N}}
    \]
    Dividing out by common factors, this reduces to showing
    \[
    A^{2-\delta} e^{-c\beta a_0/N}+A^{-\delta} \stackrel{?}{\le} A^{-C_1\beta}
    \]
    Since $a_0/N>C'\log A$, we get the new inequality
    \[
    A^{2-\delta} A^{-cC'\beta}+A^{-\delta} \stackrel{?}{\le} A^{-C_1\beta}.
    \]
    We are free to choose $\beta>0$ small and $C'>0$ large so that the desired inequality holds. This completes the proof.
	\section{A weaker propagation of smallness for nonanalytic coefficients}
	For the reader's convenience, we state here a weaker propagation of smallness result for elliptic operators with Lipschitz coefficients, which follows as a corollary to Lemma \ref{lem base case}. Note that although Lemma \ref{lem base case} is stated for harmonic functions, its proof follows by applying Theorem \ref{thm:NV}, which is valid for solutions to elliptic equations with Lipschitz coefficients, and Lemma \ref{lem base case} also holds in this generality.

    \begin{corollary}
        Let $m$, $\delta$ be positive numbers and let $L$ be an elliptic operator of the form
        \[
        Lf=\nabla\cdot(A\nabla f)+\mathbf{b}\cdot\nabla f
        \]
        where $A$ is uniformly elliptic with Lipschitz coefficients and $\mathbf{b}$ is bounded and measurable. Let $u$ be a solution to $Lu=0$ in the unit ball $B\subset \R^n$ such that $\sup_{B} |\nabla u| = 1$, and suppose $E \subset B_{1/2}(0)$
		satisfies $\mathcal H_\infty^{n-2+\delta}(E) > m$ and denote $\epsilon=\sup_{E}|\nabla u|$.
		Then there exist constants $C, \alpha >0$ depending on $m, \delta$ and $n$ only such that
		\[
		\sup_{B(0, 1/2)} |\nabla u| \leq \exp\left(-C(\log(1/\epsilon))^{1/3}\right)
		\]
    \end{corollary}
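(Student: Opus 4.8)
The plan is to deduce this corollary directly from Lemma~\ref{lem base case} by a rescaling and iteration argument, mirroring how Theorem~\ref{thm:prop_of_smallness} is obtained from Theorem~\ref{thm:cubes_with_large_index} in the analytic case, but now with the crucial difference that we no longer have a hyperplane lemma for Lipschitz coefficients and therefore cannot run the recursive inequality. The key observation is that Lemma~\ref{lem base case} and Theorem~\ref{thm:NV} both hold verbatim for solutions of $Lu = 0$, and the only place analyticity was used in the proof of Theorem~\ref{thm:prop_of_smallness} was in the induction step via Theorem~\ref{thm:cubes_with_large_index}. So the strategy is to replace the induction step by the trivial bound $N(Q) \le N$ everywhere, losing a factor in the frequency, and to optimize the scale.

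First I would reduce to the unit cube by standard interior estimates for $L$: after rescaling and using that $\sup_B |\nabla u| = 1$, the doubling index $N := N(Q)$ of $\nabla u$ over a suitable fixed cube $Q \subset B$ is finite (though not bounded a priori — it can be as large as a power of $\log(1/\epsilon)$, and this is exactly what we must track). Then I would apply Lemma~\ref{lem base case}, but with $N_0$ replaced by the actual (possibly large) value $N$: dividing $Q$ into $K^n$ subcubes with $K = e^{\gamma a / N}$, the sublevel set $|\nabla u|^{-1}([0, e^{-a}])$ is covered by at most $C^{N^2} K^{n-2}$ bad cubes, giving
\[
\mathcal H_\infty^{n-2+\delta}\big(|\nabla u|^{-1}([0, e^{-a}])\big) \le C(\delta)^{N^2} e^{-\beta(\delta) a \delta / N}.
\]
Writing $a = \log(1/\epsilon)$ and demanding this content be $\le m$ forces $N^2 \gtrsim \log(1/\epsilon)$ up to logarithmic corrections — more precisely, the inequality $C^{N^2} e^{-\beta a/N} \ge m$ forces either $N^2 \gtrsim \log(1/\epsilon)$ directly, or, balancing the two exponents $N^2 \log C$ against $\beta a/N$, we get $N^3 \gtrsim a$, i.e. $N \gtrsim (\log(1/\epsilon))^{1/3}$.

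Then I would feed this lower bound on the frequency back into the analogue of \eqref{eq final implication}: since $N(Q) \gtrsim (\log(1/\epsilon))^{1/3}$ and $\sup_B |\nabla u| = 1$, almost monotonicity of the doubling index (Proposition~\ref{prop almost monotonicity}, which holds for $L$ as well) and Corollary~\ref{cor big scale doubling} give
\[
\sup_{B(0,1/2)} |\nabla u| \le C \cdot (1/2)^{cN} \le \exp\big(-C (\log(1/\epsilon))^{1/3}\big).
\]
The main obstacle — and the reason the exponent is $1/3$ rather than a power of $\epsilon$ — is precisely the $C^{N^2}$ factor in Theorem~\ref{thm:NV}, which cannot be removed for Lipschitz coefficients: it forces the trade-off $N^2 \log C \sim \beta a / N$ instead of the linear relation $N \sim a$ that the hyperplane-lemma machinery delivers in the analytic case. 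A minor technical point to handle carefully is the boundary case where $a$ is small compared to $N \log K$ (so that the subcube lower bound $\inf_q |\nabla u| \ge c K^{-CN}$ is vacuous); there one simply notes that a compact set has finite $(n-2+\delta)$-content, so the conclusion is automatic once $C$ is large, exactly as in the proof of \eqref{eq exponential bound}.
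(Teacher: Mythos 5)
Your proposal is correct and follows essentially the same route as the paper: apply Lemma~\ref{lem base case} (valid for Lipschitz coefficients via Theorem~\ref{thm:NV}) to the sublevel set containing $E$, use $\mathcal H_\infty^{n-2+\delta}(E)>m$ to force $C^{N^2}\epsilon^{\beta/N}\ge m$ and hence $N\gtrsim(\log(1/\epsilon))^{1/3}$, then convert this frequency lower bound into the stated sup bound via \eqref{eq final implication}. The only differences are cosmetic (the paper leaves the final conversion step implicit, and your phrase ``demanding this content be $\le m$'' should read that the content is $\ge m$, which is the inequality you in fact use).
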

    \begin{proof}
    Denote $\kappa=\mathcal{H}_{\infty}^{n-2+\delta}(|\nabla u|^{-1}([0,e^{-a}]))>0$ where $\epsilon=e^{-a}$. Then by Lemma \ref{lem base case}, we have
	\[
	\kappa\le C^{N^2}\epsilon^{\beta /N}.
	\]
	Rearranging gives
	\[
	N^3\log C -N \log \kappa \ge \beta \log(1/\epsilon).
	\]
	Hence, we deduce that
	\begin{equation}\label{eq weak GPS}
		N \ge C(\beta,\kappa)\log(1/\epsilon)^{1/3}.
	\end{equation}
    \end{proof}
	This is weaker than the ideal propagation of smallness in which the $\log(1/\epsilon)$ term has power 1. Improvements in the dependence on $N$ in the bound of Theorem \ref{thm:NV} would theoretically improve the bound in \eqref{eq weak GPS}, but even a hypothetical quadratic dependence on $N$ would not be enough to deduce optimal propagation of smallness in this manner.
	
	\bibliographystyle{alpha}
	\bibliography{sources}

\end{document}